\newtheorem{theorem}{Theorem}[section]
\newtheorem{remark}{Remark}[section]
\newtheorem{corollary}{Corollary}[section]
\newtheorem{example}{Example}[section]
\newsavebox{\@brx}
\newcommand{\llangle}[1][]{\savebox{\@brx}{\(\m@th{#1\langle}\)}%
	\mathopen{\copy\@brx\kern-0.5\wd\@brx\usebox{\@brx}}}
\newcommand{\rrangle}[1][]{\savebox{\@brx}{\(\m@th{#1\rangle}\)}%
	\mathclose{\copy\@brx\kern-0.5\wd\@brx\usebox{\@brx}}}
\begin{document}
	\title{Common fixed points for set-valued contraction on a metric space with graph}
	\author{
		Pallab Maiti\footnotemark[2], Asrifa Sultana\footnotemark[1] \footnotemark[2]}
	\date{ }
	\maketitle
	\def\thefootnote{\fnsymbol{footnote}}
	
	\footnotetext[1]{ Corresponding author. e-mail- {\tt asrifa@iitbhilai.ac.in}}
	\noindent
	\footnotetext[2]{Department of Mathematics, Indian Institute of Technology Bhilai, Raipur - 492015, India.
	}
	%\author{Asrifa Sultana\footnotemark[1] , Shivani Valecha}
	%\address{Department of Mathematics, Indian Institute of Technology Bhilai, Raipur - 492015, India}
	%\date{}
	
	%\begin{document}
	
	%\maketitle
	%\footnotetext[1]{
		%Department of Mathematics, Indian Institute of Technology Bhilai, Raipur - 492015, India\\
		
		%}
	\begin{abstract}
	In this article, we derive a common fixed point result for a pair of single valued and set-valued mappings on a metric space having graphical structure. In this case, the set-valued map is assumed to be closed valued instead of closed and bounded valued.
	%In this article, with out assuming the Hausdorff distance function we deduce a common fixed point result for a generalized set-valued mapping, which are only closed valued on a metric space incorporates with a graph. 
	Several results regarding common fixed points and fixed points follow from the main theorem of this article. By applying our theorem, we deduce the convergence of the iterates for a nonlinear $q$-analogue Bernstein operator. Furthermore, we establish sufficient criteria for the occurrence of a solution to a fractional differential equation.
	\end{abstract}
	{\bf Keywords:}
	Common fixed points; Coincidence points; Graph; Fractional differential equation; $q$-analogue Bernstein operator.\\
	{\bf Mathematics Subject Classification:}
	47H10, 54H25
	%\noindent {\bf Key words:}
	
\section{Introduction}
Suppose that $W$ is a non-void set endowed with a metric $d$. The set $CB(W)$ represents the collection of all non-void bounded closed subsets of $W$. A map $H:CB(W)\times CB(W) \to \mathbb{R}$ is described as a Hausdorff metric on $CB(W)$, whenever for $Y$ and $Z$ lies in $CB(W)$, 
\begin{center}
	$\displaystyle{	H(Y,Z)= \max\left\{ \sup_{u\in Y}  D(u,Z),\,\,\sup_{v\in Z}D(v,Y)\right\}}$,
\end{center}
whereas the notation $D(u,Z)$ describes the $\inf_{v\in Z}d(u,v)$. Further, $CL(W)$ expresses the collection of all non-void subsets of $W$, which are closed. Given a set-valued map $F:W\to CL(W)$, an element $w^*\in W$ is stated as a fixed point for $F$, if it lie in $\{w\in W: w\in Fw\}$. The occurrence of fixed points for a set-valued contraction first derived by Nadler \cite{nadler} in the year of 1969.
%The study of   in the year of 1969.
% He \cite{nadler} established that a set-valued map $F$ from $(Q,d)$ into $(CB(Q),H)$ (where $CB(Q)$ denotes the collection of all non-void bounded closed subsets of $Q$ and $H$ plays as a Hausdorff metric on $CB(Q)$) admits a fixed point if for every $p,q$ in $Q$, $H(Fp,Fq)\leq \theta d(p,q)$, where $\theta\in [0,1)$ and $(Q,d)$ is complete. 
%A several extension of this result can be found in \cite{mizo, mam, frigon}. 
Later, Mizoguchi and Takahashi \cite{mizo} improved the Nadler's \cite{nadler} result for a set-valued non-linear contraction in the year of 1989. The authors \cite{mizo} deduced that if $F:W\to CB(W)$ and for each $v,w$ in $W$ fulfils $H(Fv, Fw)\leq k(d(v,w))d(v,w)$, where $k \in U=\{h:[0,\infty)\to [0,1)| \limsup_{r\to t^+}h(r)<1,\forall \,0\leq t<\infty\}$, then occurrence of a fixed point for the map $F$ is guaranteed whenever $(W,d)$ is complete. There are several generalization of this result in the literature, which can be found in \cite{berinde,mam, mam_feng}. In the year 2006, Feng and Liu \cite{feng} extended the well known Nadler's \cite{nadler} result for a set-valued generalized contraction $F$ from $W$ into $CL(W)$, where the Hausdorff distance function is not involved.
\begin{theorem}\cite{feng}\label{feng}
	For the metric space $(W,d)$, a set-valued map $F:W\to CL(W)$ in order that for each $v\in W$, $w\in Fv$ fulfils
	\begin{equation}
		D(w,Fw)\leq \theta d(v,w)~\textrm{where}~0\leq \theta<1 .
	\end{equation}
	Then there is an element $w^*\in Fw^*$ if the map $h: W\to \mathbb{R}$ given by $h(v)=D(v,Fv)$ is lower semicontinuous and $(W,d)$ is complete.       
\end{theorem}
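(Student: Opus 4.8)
\emph{Sketch of proof.} The plan is the classical ``greedy orbit'' construction. Starting from an arbitrary $w_0\in W$, I would use the selection hypothesis repeatedly to build a sequence $(w_n)_{n\ge 0}$ with $w_{n+1}\in Fw_n$ along which the function $h(v)=D(v,Fv)$ decays geometrically; then show that this forces $(w_n)$ to be Cauchy; pass to a limit $w^*$ using completeness; and finally read off $w^*\in Fw^*$ from lower semicontinuity of $h$ together with the closedness of the value $Fw^*$.

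More concretely, given $w_n$ I would pick $w_{n+1}\in Fw_n$ so that the admissible selection inequality holds, i.e.\ $d(w_n,w_{n+1})$ is controlled by $D(w_n,Fw_n)$, and simultaneously $D(w_{n+1},Fw_{n+1})\le\theta\, d(w_n,w_{n+1})$. Chaining the two inequalities across consecutive stages yields an estimate of the form $d(w_n,w_{n+1})\le \lambda\, d(w_{n-1},w_n)$ with a fixed $\lambda<1$; hence $d(w_n,w_{n+1})\le \lambda^{\,n}\, d(w_0,w_1)$, the series $\sum_n d(w_n,w_{n+1})$ converges, and by the triangle inequality $(w_n)$ is Cauchy. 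Note also that $h(w_n)=D(w_n,Fw_n)\le\theta\, d(w_{n-1},w_n)\to 0$, so $h(w_n)\to 0$.

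By completeness there is $w^*\in W$ with $w_n\to w^*$. Lower semicontinuity of $h$ then gives $0\le h(w^*)\le\liminf_{n}h(w_n)=0$, so $D(w^*,Fw^*)=0$. Since $Fw^*\in CL(W)$ is closed, a point at zero distance from it belongs to it, i.e.\ $w^*\in Fw^*$, which is the desired conclusion. (If at some stage $D(w_n,Fw_n)=0$ already, the same closedness argument gives $w_n\in Fw_n$ and we stop early.)

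The delicate point — and the only place real care is needed — is the first step: extracting from the hypothesis a genuinely contracting selection. Because $D(w_n,Fw_n)$ is an infimum, a selected $w_{n+1}\in Fw_n$ need not realize it, so one must choose $w_{n+1}$ ``almost optimally'' (or use the auxiliary constant built into the admissible-set formulation) so that the ratio $\lambda$ in $d(w_n,w_{n+1})\le\lambda\, d(w_{n-1},w_n)$ is strictly below $1$ rather than merely $\le 1$; monotonicity of $n\mapsto D(w_n,Fw_n)$ is not available, so this cannot be shortcut by invoking a contraction-principle black box and the orbit must be built by hand. Everything after that is the routine Cauchy/completeness/lower-semicontinuity argument sketched above.
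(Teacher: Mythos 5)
Your proposal is correct and follows essentially the same route as the paper: the paper obtains Theorem~\ref{feng} as the special case of Theorem~\ref{thm3.2} with $f$ the identity, $E(G)=W\times W$ and $k(t)=\theta$, and the proof of Theorem~\ref{thm3.2} is exactly your greedy orbit with an almost-optimal selection (there realized by choosing $w_{n+1}\in Fw_n$ within the factor $1/\sqrt{\theta}$ of the infimum, giving the contraction ratio $\lambda=\sqrt{\theta}<1$), followed by the Cauchy/completeness step and the lower-semicontinuity plus closedness argument. You also correctly flag the one delicate point, namely that the infimum $D(w_n,Fw_n)$ need not be attained and the selection constant must be chosen so that the chained ratio stays strictly below $1$.
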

For some more results about the fixed points for set-valued generalized contraction can be found in \cite{azam,klim}.

In the parallel development, many mathematicians scrutinized common fixed point results for a couple of single valued mappings on a metric space, one may see \cite{abbas, Doric}, whereas in \cite{khan,kumam}, the authors studied the common fixed point theory for a couple of single valued and set-valued mappings.
%On another side, many researchers are interested for seeking the coincidence points as well as common fixed points for set-valued mappings. A several significant results towards this direction can be found in  \cite{kamran,khan,pathak,kumam}
In the year 2007, Kamran \cite{kamran} established a common fixed point result for a single valued and set-valued generalized contractive mappings. This result actually enables the author \cite{kamran} to generalize several fixed point and common fixed point results on metric space available in the literature. Indeed, the author \cite{kamran} established the below stated result. 
\begin{theorem}\cite{kamran}\label{thm1}
	For the metric space $(W,d)$, the maps $F:W\to CB(W)$, $f:W\to W$ in order that for each $u\in W$, $Fu\subset fW$ and 
	\begin{equation}\label{eqn1}
		H(Fv,Fw)\leq k(d(fv,fw))d(fv,fw)+M\,D(fv,Fw)~\text{for each}~ v,w \in W,
	\end{equation}
	where $k\in U$, $M\geq 0$. Then occurrence of $w^*$ having $fw^*\in Fw^*$ is guaranteed if $fW$ is complete. Moreover, $w^*=fw^*\in Fw^*$, whenever $ffw^*=fw^*$ and $\{ffw^*\}\subset Ffw^*$.
\end{theorem}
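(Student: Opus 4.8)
The plan is to construct a Cauchy sequence iteratively using the contractive condition \eqref{eqn1}, following the Picard-type scheme adapted to the coincidence-point setting. First I would fix an arbitrary $w_0 \in W$. Since $Fw_0 \subset fW$, pick $w_1 \in W$ with $fw_1 \in Fw_0$. Now apply \eqref{eqn1} with $v = w_0$, $w = w_1$: we get $H(Fw_0, Fw_1) \le k(d(fw_0,fw_1))\,d(fw_0,fw_1) + M\,D(fw_0, Fw_1)$. The point of the $M\,D(fv,Fw)$ term is that when $fw_1 \in Fw_0$, the term $D(fw_1, Fw_1) \le H(Fw_0, Fw_1)$, but we want the $M$-term to vanish: indeed $D(fw_0, Fw_1)$ need not be zero, so instead I would use the reverse—note $D(fw_1, Fw_1) \le H(Fw_0,Fw_1)$ and estimate $H(Fw_0,Fw_1)$; the $M$ term $M D(fw_0, Fw_1)$ is the obstacle here, so the correct choice is to swap the roles, writing the inequality with $v=w_1, w=w_0$ is not symmetric either. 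The standard trick (as in Kamran's argument) is: since $fw_1 \in Fw_0$ and $k<1$ on the relevant value, for any $\epsilon>0$ one can choose $w_2$ with $fw_2 \in Fw_1$ and $d(fw_1, fw_2) \le H(Fw_0, Fw_1) + (\text{something small})$, then bound $H(Fw_0,Fw_1)$ using \eqref{eqn1} with $v=w_0,w=w_1$, where crucially $D(fw_0, Fw_1) \le d(fw_0, fw_1)$ is \emph{not} what we want — rather we exploit that we are free to also get $fw_1\in Fw_0$ gives $D(fw_1,Fw_1)\le H(Fw_0,Fw_1)$, and separately choose the next iterate to control $d(fw_1,fw_2)$.

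More concretely, the key recursive step I would carry out: given $fw_{n} \in Fw_{n-1}$, use \eqref{eqn1} with $v = w_{n-1}$, $w = w_n$ to obtain
\begin{equation*}
D(fw_n, Fw_n) \le H(Fw_{n-1}, Fw_n) \le k\big(d(fw_{n-1}, fw_n)\big)\, d(fw_{n-1}, fw_n) + M\, D(fw_{n-1}, Fw_n),
\end{equation*}
and observe $D(fw_{n-1}, Fw_n) \le D(fw_{n-1}, \{fw_n\})$ is not directly available; instead one notes $fw_n \in Fw_{n-1}$ so $D(fw_{n-1}, Fw_n)$ must be handled by reversing indices. The cleaner route, which I would adopt, is to apply \eqref{eqn1} with $v = w_n, w = w_{n-1}$: then $H(Fw_n, Fw_{n-1}) \le k(d(fw_n,fw_{n-1}))d(fw_n,fw_{n-1}) + M D(fw_n, Fw_{n-1})$, and since $fw_n \in Fw_{n-1}$ we have $D(fw_n, Fw_{n-1}) = 0$, killing the $M$-term. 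Then since $fw_n \in Fw_{n-1}$ and $Fw_n \subset fW$, for the chosen $\eta > 1$ with $k(t)\eta < 1$ near the limiting value, pick $w_{n+1}$ with $fw_{n+1} \in Fw_n$ and $d(fw_n, fw_{n+1}) \le \eta\, H(Fw_{n-1}, Fw_n)$. Combining, $d(fw_n, fw_{n+1}) \le \eta\, k(d(fw_{n-1},fw_n))\, d(fw_{n-1}, fw_n)$, and by the $U$-class property of $k$ one shows the ratios are eventually bounded by a constant $\lambda < 1$, giving a geometric decay of $d(fw_n, fw_{n+1})$, hence $(fw_n)$ is Cauchy.

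By completeness of $fW$, $fw_n \to fz$ for some $z \in W$ (since the limit lies in $fW$). Then I would show $fz \in Fz$: from $d(fw_{n+1}, Fz) \le H(Fw_n, Fz) \le k(\cdots) d(fw_n, fz) + M D(fw_n, Fz)$ and $D(fw_n,Fz)\le d(fw_n,fz)+D(fz,Fz)$ type estimates, combined with $d(fw_n, fz) \to 0$ and closedness of $Fz$, one concludes $D(fz, Fz) = 0$, i.e. $fz \in Fz$; so $w^* = z$ is a coincidence point. Finally, for the ``moreover'' part, suppose $ffw^* = fw^*$ and $\{ffw^*\} \subset Ffw^*$; set $u = fw^*$, so $fu = ffw^* = fw^* = u$ and $u \in Fu$, hence $u = fu \in Fu$, making $u = fw^*$ a common fixed point; a short computation using \eqref{eqn1} with $v = w = $ suitable points (or directly $u \in Fu$) closes this. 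The main obstacle I anticipate is the bookkeeping in the Cauchy estimate: one must carefully invoke the defining property of the class $U$ (that $\limsup_{r \to t^+} k(r) < 1$) to pass from the pointwise bound $k(d(fw_{n-1},fw_n))$ to a uniform contraction ratio, since $k$ itself need not be bounded away from $1$ globally — this requires first arguing the sequence $d(fw_{n-1},fw_n)$ is non-increasing and converges, then localizing near its limit.
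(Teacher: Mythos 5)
First, a point of reference: the paper does not prove this statement itself --- Theorem \ref{thm1} is quoted from Kamran \cite{kamran}, and the paper only indicates (in the remark explaining how Theorem \ref{thm3.2} extends Theorem \ref{thm1}) that it follows from the main Theorem \ref{thm3.2} by taking $E(G)=W\times W$. Your direct iterative argument is essentially the same scheme as the proof of Theorem \ref{thm3.2} and of Kamran's original result: build $fw_{n+1}\in Fw_n$, use the orientation of \eqref{eqn1} in which the perturbation term is $M\,D(fw_n,Fw_{n-1})=0$ because $fw_n\in Fw_{n-1}$, show $d(fw_n,fw_{n+1})$ is decreasing, and invoke the $\limsup$ property of $k\in U$ at the limit of these distances to obtain eventual geometric decay. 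Your closing observation that one must first prove $d(fw_{n-1},fw_n)$ decreases and then localize near its limit is exactly the right repair for the circularity in ``choose $\eta>1$ with $k(t)\eta<1$ near the limiting value''; the paper's version of this is to take the $n$-dependent slack $1/\sqrt{k(d(fw_{n-1},fw_n))}$, which depends only on data already constructed.

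There is, however, one genuine gap in your final step. To prove $fz\in Fz$ you write $D(fw_{n+1},Fz)\le H(Fw_n,Fz)\le k(\cdot)\,d(fw_n,fz)+M\,D(fw_n,Fz)$ and then propose $D(fw_n,Fz)\le d(fw_n,fz)+D(fz,Fz)$. Feeding this back leaves a term $M\,D(fz,Fz)$ on the right-hand side of the resulting inequality for $D(fz,Fz)$, so the argument closes only when $M<1$; since $M\ge 0$ is arbitrary, this does not suffice. The repair is the same swap you already exploited in the iteration: apply \eqref{eqn1} with the two points interchanged, so the perturbation term becomes $M\,D(fz,Fw_n)\le M\,d(fz,fw_{n+1})\to 0$, and then
\[
D(fz,Fz)\le d(fz,fw_{n+1})+H(Fz,Fw_n)\le d(fz,fw_{n+1})+k(d(fz,fw_n))\,d(fz,fw_n)+M\,d(fz,fw_{n+1})\to 0,
\]
whence closedness of $Fz$ gives $fz\in Fz$. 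The ``moreover'' part as you state it is correct and needs no appeal to \eqref{eqn1}. Apart from this orientation slip in the limit step (and the meandering first paragraph, which you do eventually resolve correctly), the plan is sound and matches the technique underlying the paper's Theorem \ref{thm3.2}.
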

\noindent We note that, the above mentioned result deduces Mizoguchi-Takahashi's \cite{mizo} theorem by considering $f$ to be an identity map and $M=0$.

%The Theorem \ref{berinde} due to Berinde-Berinde \cite{berinde} is followed from the above mentioned Theorem \ref{thm1} by considering $f$ to be an identity map over the set $Q$.

On the other side, Jachymski \cite{Jach} extended the renowned Banach contraction principle via $G$-contraction on a metric space having  graph $G$. With the aid of this result, the author \cite{Jach} enables to extend and unify some fixed point results proved in metric spaces as well as metric spaces having partial order.  
% Jachymski \cite{Jach} first studied the fixed points through the language of graph in the year 2008. He extended the renowned Banach contraction principle via $G$-contraction on a complete metric space consists with a directed graph $G$. This result enables the author \cite{Jach} to extend and unify some fixed point theorems proved in metric spaces as well as metric spaces having partial order.
Followed by Jachymski \cite{Jach}, an extension of the Theorem \ref{feng} due to Feng and Liu \cite{feng} was derived by Sultana and Vetrivel \cite{mam_feng}  in the year 2017, on a metric space consisting with directed graph $G$. 
% Followed by Jachymski \cite{Jach}, in the year 2014, the Mizoguchi-Takahashi \cite{mizo} fixed point result was extended by Sultana and Vetrivel \cite{mam} for a Mizoguchi-Takahashi $G$ \cite{mam} contraction defined over a complete metric space incorporates with a directed graph.
The authors \cite{mam_feng} demonstrated the below stated result.  
\begin{theorem}\cite{mam_feng}\label{mam}
	Let $(W,d)$ along with a directed graph $G$ and a map $F:W\to CL(W)$ in order that for every $v \in W$, $w\in Fv$ with $(v,w)\in E(G)$ fulfils
	\begin{enumerate}
		\item [(i)] $D(w,Fw)\leq k(d(v,w))d(v,w)$ for $k\in U$,
		\item [(ii)] for any $p\in Fw$ having $d(p,w)\leq d(v,w)$ indicates $(p,w)\in E(G)$. 
	\end{enumerate}
	Suppose that $w_0\in W$,  $w_1\in Fw_0$ with $(w_0,w_1)\in E(G)$. Moreover, assume that if any sequence $\{w_n\}_n\subset W$ with $(w_{n-1},w_{n})\in E(G)$ for every $n$, and $w_n\to w$ then there is $\{w_{n_r}\}_{r\in \mathbb{N}}$ in order that $D(w,Fw)\leq \liminf_{r\to \infty}D(w_{n_r},F(w_{n_r}))$. Then there is an element $w^*\in Fw^*$ if $(W,d)$ is complete.
\end{theorem}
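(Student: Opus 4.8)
The plan is to carry out, \emph{along the graph}, the iterative construction behind Feng--Liu's Theorem~\ref{feng} and its Mizoguchi--Takahashi refinement, using hypothesis~(ii) to keep the iterates edge‑connected so that the limit hypothesis of the theorem applies to the orbit we generate. Start from the given $w_0$ and $w_1\in Fw_0$ with $(w_0,w_1)\in E(G)$; if $w_0=w_1$ we are done, so assume $a_0:=d(w_0,w_1)>0$. To absorb the nonlinear coefficient $k\in U$, introduce $\phi(t)=\tfrac{1+k(t)}{2}$, so that $k(t)<\phi(t)<1$ for every $t\ge 0$ while $\limsup_{r\to t^+}\phi(r)<1$ still holds. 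Inductively, suppose $w_n\in Fw_{n-1}$, $(w_{n-1},w_n)\in E(G)$, and $a_{n-1}:=d(w_{n-1},w_n)>0$. Hypothesis~(i) gives $D(w_n,Fw_n)\le k(a_{n-1})\,a_{n-1}$; since $D(w_n,Fw_n)=\inf_{y\in Fw_n}d(w_n,y)$ and $\phi(a_{n-1})<1$, we may choose $w_{n+1}\in Fw_n$ with
\[
a_n:=d(w_n,w_{n+1})\;\le\;\frac{D(w_n,Fw_n)}{\phi(a_{n-1})}\;\le\;\frac{2k(a_{n-1})}{1+k(a_{n-1})}\,a_{n-1}\;<\;a_{n-1}.
\]
If $a_n=0$ at some stage then $w_n\in Fw_n$ and we stop; likewise if ever $D(w_n,Fw_n)=0$ we are done since $Fw_n$ is closed. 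Otherwise $a_n<a_{n-1}$ gives $d(w_{n+1},w_n)\le d(w_{n-1},w_n)$, so hypothesis~(ii) (with $v=w_{n-1}$, $w=w_n$, $p=w_{n+1}$) carries the graph relation to the next consecutive pair, keeping $\{w_n\}$ admissible for the limit hypothesis.

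Next I would establish convergence of $\{w_n\}$. The sequence $\{a_n\}$ is strictly decreasing, so $a_n\downarrow t$ for some $t\ge 0$; hence $a_{n-1}\to t^+$, and the defining property of $U$ yields $\limsup_{n}\frac{2k(a_{n-1})}{1+k(a_{n-1})}\le\frac{2\lambda}{1+\lambda}<1$, where $\lambda:=\limsup_{r\to t^+}k(r)<1$. Fixing $\mu$ with $\frac{2\lambda}{1+\lambda}<\mu<1$, one gets $a_n<\mu\,a_{n-1}$ for all large $n$, so $\sum_n a_n<\infty$ and $\{w_n\}$ is Cauchy. (Incidentally this forces $t=0$, though only summability is needed.) By completeness of $(W,d)$, $w_n\to w^*$ for some $w^*\in W$.

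Finally, since $(w_{n-1},w_n)\in E(G)$ for every $n$ and $w_n\to w^*$, the limit hypothesis produces a subsequence $\{w_{n_r}\}$ with $D(w^*,Fw^*)\le\liminf_{r\to\infty}D(w_{n_r},F(w_{n_r}))$. But $w_{n_r+1}\in Fw_{n_r}$, so $D(w_{n_r},Fw_{n_r})\le d(w_{n_r},w_{n_r+1})=a_{n_r}\to 0$; hence $D(w^*,Fw^*)=0$, and since $Fw^*$ is closed, $w^*\in Fw^*$, which is the assertion. The step I expect to need the most care is the bookkeeping forced by the nonlinear $k\in U$: the auxiliary device must be chosen so that the ratio $a_n/a_{n-1}$ is dominated by a quantity whose $\limsup$ along the monotone sequence $\{a_{n-1}\}$ is strictly below $1$, while \emph{simultaneously} preserving the strict decrease $a_n<a_{n-1}$ that makes~(ii) applicable at every stage, so the whole orbit remains in the class of sequences to which the limit hypothesis speaks; the degenerate cases $a_n=0$ or $D(w_n,Fw_n)=0$ at a finite stage should be isolated at the outset.
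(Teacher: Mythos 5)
Your proof is correct and follows essentially the same route as the paper, which obtains this theorem as the $f=\mathrm{id}$ case of its Theorem \ref{thm3.2}: the same Feng--Liu-type iteration along edges of $G$, with your auxiliary factor $\phi(t)=\tfrac{1+k(t)}{2}$ playing the role of the paper's $1/\sqrt{k(t)}$ (yours has the minor advantage of remaining defined when $k(t)=0$). The only point worth flagging is that you read hypothesis (ii) as producing the edge $(w,p)$ rather than the literally stated $(p,w)$; this is clearly the intended reading, consistent with condition (ii) of Theorem \ref{thm3.2} and with the forward edges $(w_{n-1},w_n)$ demanded by the limit hypothesis, so it is not a gap in your argument.
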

\noindent As an application of the Theorem \ref{mam}, the authors \cite{mam_feng} improved the Kelisky and Rivlin \cite[Theorem 1]{KR8} result about the convergence of iterates for Bernstein operator on a complete normed linear space. In the literature, there are several fixed point theorems using the notion of graph theory, which can be found in \cite{frigon,khan, mam}.

%In this present article, we extend the Theorem \ref{thm1} due to Kamran \cite{kamran} for a set-valued mapping over a metric space having graphical structure. Our main result also generalized the above mentioned Theorem \ref{berinde} due to Berinde-Berinde. For $N=1$ in the Theorem \ref{mam} by Sultana-Vetrivel \cite{mam} is also followed from our theorem. An invariant approximation result is also established through the language of common fixed point theory. As an application, the occurrence of a solution for a Caputo fractional differential equation is discussed. As an another application of our theorem, the convergence of successive approximation for $(p,q)$-analogue of Bernstein type nonlinear operator defined over the domain $C[0,1]$ is also derived. 
Inspired by Kamran \cite{kamran} and Sultana-Vetrivel \cite{mam_feng}, 
in this present manuscript, we derive a common fixed point result on a metric space having graphical structure for a set-valued mapping, which is supposed to be closed valued instead of bounded and closed valued. 
% for a generalized set-valued mapping, which are basically closed valued instead of bounded and closed valued defined on a metric space having graphical structure. 
The main result is indeed an extension of the Theorem \ref{thm1} due to Kamran \cite{kamran} and the Theorem \ref{mam} due to Sultana-Vetrivel \cite{mam_feng}. 
%Our main theorem enables us to generalize the aforementioned Theorem \ref{klim} due to Klim and Wardowski \cite{klim} on a metric space having a directed graph. 
As an implementation of our result,  we establish the convergence of iterates for some nonlinear operators, which enables us to derive the convergence of iterates for a nonlinear $q$-analogue Bernstein operator. As an another application, we derive a sufficient criteria for the presence of a solution for a fractional order differential equation. An invariant best approximation result is also discussed in this article.
%%%%%%%%%%%%%%%%%%%%%%%%%%%%%%%%%%%%%%%%%%%%%%%%%%%%%%%%%%%%%%%%%%%%
\bigskip
%%%%%%%%%%%%%%%%%%%%%%%%%%%%%%%%%%%%%%%%%%%%%%%%%%%%%%%%%%%%%%%%%%%%%%%
\section{Preliminaries}
In this part, we utilize some essential symbols and definitions, which is needful all throughout this article.
% Up to the end of this work we denote 
%$CB(W)=\{P: P\subseteq W,~\text{and P is non-void bounded closed}\}$
% and a below sated function $H$ which becomes a Hausdorff metric on $CB(W)$. % induced by the metric $d$.  
%Consider $X$ and $Y$ lies in $CB(W)$, 
%\begin{center}
%	$\displaystyle{	H(X,Y)= \max\left\{ \sup_{u\in X}  d(u,Y),\,\,\sup_{v\in Y}d(v,X)\right\}}$,
%\end{center}
%whereas the notation $d(u,Y)$ represents the $\inf_{v\in Y}d(u,v)$ and similarly $d(v,X)$. Also $CL(W)$ represents the collection of non-void subsets of $W$. One immediate result is followed from this definition.
%\begin{lemma}\label{lem1}\cite{kamran}
%%	Consider $U,\,V\in CB(Q)$, then for $s>1$, the occurrence of $u$ in $U$ having $d(u,v)\leq sH(U,V)$ for every $v\in V$ is guaranteed.
%	Let $U\in CL(Q)$. Then for each $q\in Q$ and $l>1$, there is $u\in U$ in order that $d(q,u)\leq ld(q,U)$.  
%\end{lemma} 
An element $w^*$ is described as a coincidence point for a couple of mappings $F:W\to CL(W)$ and $f:W\to W$, if $\{fw^*\}\subset Fw^*$. Furthermore, an element $w^*$ is described as a common fixed point for the map $f$ coupled with $F$ if $w^*=fw^*\in Fw^*$. Additionally, the map $f$ is stated as a $F$-weakly commuting \cite{kamran} at an element $z^*$ in $W$ if $ffz^*\in Ffz^*$. Up to the end, the collection of common fixed points and coincidence points for the maps $f$ and $F$ are expressed by $Fix(f,F)$ and $Coin(f,F)$ respectively.

Now we present some fundamental terminology of graph theory (see \cite{John}), which is mandatory up to the end of this article. For the metric space $(W,d)$, let us assume a directed graph $G$ which is a pair of $(V(G),E(G))$, whereas the collection of all vertices are symbolized by $V(G)$, which is actually whole set $W$ and the collection of edges $E(G)$ includes the set $\Delta=\{(s,s):(s,s)\in W\times W\}$. Further, this reflexive graph $G$ does not possess any parallel edges.

%%%%%%%%%%%%%%%%%%%%%%%%%%%%%%%%%%%%%%%%%%%%%%%%%%%%%%%%%%%%%%%%%%%%
\section{Main results}
Up to the end of this part, we consider a metric space $(W,d)$ along with a directed graph $G(V(G),E(G))$, whereas $V(G)=W$, $\Delta \subseteq E(G)$ and $G$ has no parallel edges. In the below stated theorem, we scrutinize common fixed points on a metric space having a graph $G$ for a set-valued generalized contraction, which is assumed to be only closed-valued. 
%%%%%%%%%%%%%%%%%%%%%%%%%%%%%%%%%%%%%%%%%%%%%%%%%%%%%%%%%%%%%%%%%%%%%%%%%%%%%%%%%
\begin{theorem}\label{thm3.2}
	For the metric space $(W,d)$, the maps $F:W \to CL(W)$, $f:W\to W$ in order that for each $v\in W$, $fw\in Fv$ with $(fv,fw)\in E(G)$ fulfils,
	\begin{enumerate}\label{main}
		\item[(i)] $D(fw,Fw)\leq k(d(fv,fw))d(fv,fw)$, where $k\in U$,
		\item[(ii)] for any $fp\in Fw$ with $d(fw,fp)\leq d(fv,fw)$, indicates $(fw,fp)\in E(G)$.
	\end{enumerate} 
	Suppose that $w_0\in W$, $p_0\in F(w_0)$ with $(fw_0,p_0)\in E(G)$. Also assume that if any $\{fu_n\}_{n}\subset W$ with $(fu_{n-1},fu_{n})\in E(G)$, $fu_n\in Fu_{n-1}$ for every $n\in \mathbb{N}$ and $fu_n\to fu$, then there is $\{fu_{n_r}\}\subseteq\{fu_n\}$ in order that $D(fu,Fu)\leq \liminf_{r\to \infty}D(fu_{n_r},Fu_{n_r})$. Then there is a sequence $\{fw_n\}_{n}$ with $fw_n\in Fw_{n-1}$ converges to $fw^*$, where $fw^*\in Fw^*$ 
	is guaranteed if for each $u\in W$, $Fu\subset fW$ and $(fW,d)$ is complete.
	
	Moreover, $f$ and $F$ possess a common fixed point whenever the map $f$ is weakly commuting at $w^*$ along with $fw^*=ffw^*$.

\end{theorem}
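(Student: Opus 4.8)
The plan is to build, inductively, a sequence $\{fw_n\}_n$ with $fw_n\in Fw_{n-1}$ and $(fw_{n-1},fw_n)\in E(G)$, along which $D(fw_n,Fw_n)$ decreases fast enough to force $\{fw_n\}$ to be Cauchy; then use completeness of $(fW,d)$ and the $\liminf$-type continuity hypothesis to land the limit point $fw^*$ inside $Fw^*$. First I would start the induction: set $fw_1:=p_0$, which lies in $Fw_0$ with $(fw_0,fw_1)\in E(G)$; this is legitimate since $p_0\in F(w_0)\subset fW$, so $p_0=fw_1$ for some $w_1\in W$. Suppose $fw_n\in Fw_{n-1}$ with $(fw_{n-1},fw_n)\in E(G)$ has been chosen. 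Applying hypothesis (i) with $v=w_{n-1}$, $w=w_n$ gives $D(fw_n,Fw_n)\le k\big(d(fw_{n-1},fw_n)\big)\,d(fw_{n-1},fw_n)$. Since $k(t)<1$ for all $t$, I can pick — using the definition of infimum — a point $fw_{n+1}\in Fw_n$ (again using $Fw_n\subset fW$ to name it as $fw_{n+1}$) with
\begin{equation*}
d(fw_n,fw_{n+1})\le \sqrt{k(d(fw_{n-1},fw_n))}\;\cdot\;\text{(something)} ,
\end{equation*}
or more simply with $d(fw_n,fw_{n+1})$ close enough to $D(fw_n,Fw_n)$ that $d(fw_n,fw_{n+1})\le d(fw_{n-1},fw_n)$; this inequality is exactly what hypothesis (ii) needs to conclude $(fw_n,fw_{n+1})\in E(G)$, closing the induction.

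Next I would establish that $\{fw_n\}$ is Cauchy. The standard device (as in Mizoguchi--Takahashi and in Theorem \ref{mam}) is: the sequence $a_n:=d(fw_{n-1},fw_n)$ is nonincreasing, hence converges to some $t\ge 0$; by the defining property of the class $U$, $\limsup_{n}k(a_n)\le \limsup_{r\to t^+}k(r)=:\lambda<1$, so for all large $n$ one has $k(a_n)\le \mu$ for some fixed $\mu\in(\lambda,1)$. Choosing the selection of $fw_{n+1}$ so that $d(fw_n,fw_{n+1})\le \sqrt{\mu}\,a_n$ for large $n$ (possible since $D(fw_n,Fw_n)\le k(a_n)a_n\le \mu a_n<\sqrt{\mu}\,a_n$ leaves room) yields geometric decay of $a_n$, hence $\sum_n a_n<\infty$ and $\{fw_n\}$ is Cauchy. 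By completeness of $(fW,d)$ there is $fw^*\in fW$ with $fw_n\to fw^*$.

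Now I invoke the continuity hypothesis: the sequence $\{fw_n\}$ satisfies $(fw_{n-1},fw_n)\in E(G)$, $fw_n\in Fw_{n-1}$, and $fw_n\to fw^*$, so there is a subsequence $\{fw_{n_r}\}$ with $D(fw^*,Fw^*)\le \liminf_{r\to\infty}D(fw_{n_r},Fw_{n_r})$. But $D(fw_{n_r},Fw_{n_r})\le k(a_{n_r})a_{n_r}\le a_{n_r}\to 0$, so $D(fw^*,Fw^*)=0$; since $Fw^*$ is closed, $fw^*\in Fw^*$. This proves the first assertion. For the final assertion, assume $f$ is $F$-weakly commuting at $w^*$ and $ffw^*=fw^*$. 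Set $z=fw^*$. Weak commutativity at $w^*$ gives $ffw^*\in Ffw^*$, i.e. $fz\in Fz$; but $ffw^*=fw^*=z$, so $z\in Fz$, and also $z=fz$ by the same equality $ffw^*=fw^*$. Hence $z=fz\in Fz$, i.e. $z=fw^*$ is a common fixed point of $f$ and $F$.

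The main obstacle I anticipate is the selection step: extracting $fw_{n+1}\in Fw_n$ with simultaneously (a) $d(fw_n,fw_{n+1})$ small enough relative to $a_n$ to feed hypothesis (ii) and get the edge $(fw_n,fw_{n+1})\in E(G)$, and (b) small enough to drive the geometric estimate for Cauchyness — all while only assuming $F$ is \emph{closed}-valued (not closed and bounded), so that $D(fw_n,Fw_n)$ is a genuine infimum that need not be attained. The resolution is the usual one: because $k(a_n)<1$ strictly, any $\varepsilon$-minimizer with $\varepsilon$ chosen proportional to $(1-k(a_n))a_n$ (or to $(\sqrt{\mu}-\mu)a_n$ on the tail) does the job; care is needed to handle the degenerate case $a_n=0$, where $fw_n\in Fw_n$ already (by closedness) and the construction stabilizes, giving $fw^*=fw_n\in Fw_n=Fw^*$ outright.
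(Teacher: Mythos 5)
Your proposal is correct and follows essentially the same route as the paper's proof: construct the iterates as approximate minimizers of $D(fw_n,Fw_n)$ with enough slack to keep $d(fw_n,fw_{n+1})\leq d(fw_{n-1},fw_n)$ (feeding hypothesis (ii) for the edges), use the class-$U$ property to get an eventual geometric bound and hence Cauchyness, then apply the $\liminf$ hypothesis and closedness of $Fw^*$, finishing the common fixed point claim with the same two-line argument. The only cosmetic difference is that the paper fixes the selection once and for all via the multiplicative factor $1/\sqrt{k(d(fw_{n-1},fw_n))}$ (yielding $d(fw_n,fw_{n+1})\leq\sqrt{k(\cdot)}\,d(fw_{n-1},fw_n)$), whereas you describe an $\varepsilon$-minimizer with slack proportional to the gap $\bigl(1-k(\cdot)\bigr)d(fw_{n-1},fw_n)$; both work, and you are right to flag the degenerate case $k(\cdot)=0$, which the paper's division actually glosses over.
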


%%%%%%%%%%%%%%%%%%%%%%%%%%%%%%%%%%%%%%%%%%%%%%%%%%%%%%%% 

%\begin{theorem}\label{thm3.2}
% Suppose that $F:Q \to CL(Q)$ is  generalized $G_f$ contraction with $Fq\subset fQ$, for every $q\in Q$, where $f:Q\to Q$ in order that $(fQ,d)$ is complete. Then occurrence of $q^*\in Q$ having $fq^*\in Fq^*$ is guaranteed if there is some $q_0\in Q$ in order that $F(q_0)\cap [fq_0]_G^1$ is non-void. Also assume that for any $\{fz_n\}_{n}\subset Q$ with $(fz_{n},fz_{n+1})\in E(G)$ and $fz_n\in Fz_{n-1}$ for every positive integer $n$ and $fz_n$ converge to $fz$, then there is $\{fz_{n_t}\}_{t}$ in order that $D(fz,Fz)\leq \liminf_{t\to \infty}D(fz_{n_t},F(z_{n_t}))$. 
%	

%	
%	\begin{itemize}
	%		\item[(i)] $F(q_0)\cap [fq_0]_G^1\neq \emptyset$ for some $q_0\in Q$;
	%		\item[(ii)] if $\{fz_n\}_{n} \in Q$ with $(fz_{n},fz_{n+1})\in E(G)$ and $fz_n\in Fz_{n-1}$ for every positive integer $n$ and $fz_n$ converge to $fz$, then there is $\{fz_{n_t}\}_{t}$ in order that $D(fz,Fz)\leq \liminf_{t\to \infty}D(fz_{n_t},F(z_{n_t}))$. 
	%\end{itemize} 
	%	Moreover, $F$ and $f$ possess a common fixed point whenever the map $f$ is weakly commuting at $q^*$ along with $ffq^*=fq^*$.  
	%	%Moreover, $Fix(f,T)\neq \emptyset$, whenever $ffq^*\in Tfq^*$ and
	%\end{theorem}
	
	%%%%%%%%%%%%%%%%%%%%%%%%%%%%%%%%%%%%%%%%%%%%%%%%%%%%%%%%%%%%%%%%%%%%%%%%%%%%%%%%%%
	\begin{proof}
		As $p_0\in Fw_0$ and $Fw_0\subset fW$, then there is $w_1\in W$ in order that $p_0=fw_1$. Consequently, we obtain $(fw_0,fw_1)\in E(G)$ and $fw_1\in F(w_0)$.
		%	As $F(w_0)\cap [fw_0]_G^1$ is non-void and $Fw_0\subset fW$, then there is $w_1\in W$ in order that $fw_1\in F(w_0)$ and $(fw_0,fw_1)\in E(G)$.
		% Now by taking $\frac{1}{\sqrt{k(d(fq_0,fq_1))}}>1$, it yields from the Lemma \ref{lem1} that, there is $q_2\in Q$ with $fq_2\in F(q_1)$ fulfills
		Since $D(fw_1,Fw_1)=\inf\{d(fw_1,y): y\in Fw_1\}$, then for $\frac{1}{\sqrt{k(d(fw_0,fw_1))}}>1$ there is $w_2\in W$ with $fw_2\in F(w_1)$ fulfils
		\begin{eqnarray}\label{eqn_1st}
			d(fw_1,fw_2)&\leq& D(fw_1,Fw_1)+ \left(\frac{1}{\sqrt{k(d(fw_0,fw_1))}}-1\right)D(fw_1,Fw_1)\nonumber\\&\leq&\frac{1}{\sqrt{k(d(fw_0,fw_1))}} k(d(fw_0,fw_1))d(fw_0,fw_1)\nonumber\\
			%&\leq&\varepsilon_1k(d(fq_0,fq_1))d(fq_0,fq_1)\\%~[\text{$\because$ $(fx_0,fx_1)\in E(G)$}]\\
			&\leq&\sqrt{k(d(fw_0,fw_1))}d(fw_0,fw_1)< d(fw_0,fw_1).
		\end{eqnarray}
		%	Due to the fact that $F(q_0)\cap [fq_0]_G^1\neq \emptyset$ and $Fq_0\subset fQ$ we obtain $q_1\in Q$ with $(fq_0,fq_1)\in E(G)$. Now using the Lemma \ref{lem1}, for $\varepsilon_1=\frac{1}{\sqrt{k(d(fq_0,fq_1))}}>1$, there is $q_2\in Q$ with $fq_2\in F(q_1)$ satisfying 
		%	\begin{eqnarray*}
			%		d(fq_1,fq_2)&\leq& \varepsilon_1 D(fq_1,Fq_1)\\
			%		&\leq&\varepsilon_1k(d(fq_0,fq_1))d(fq_0,fq_1)\\%~[\text{$\because$ $(fx_0,fx_1)\in E(G)$}]\\
			%		&\leq&\sqrt{k(d(fq_0,fq_1))}d(fq_0,fq_1)< d(fq_0,fq_1).
			%	\end{eqnarray*}
		Due to the fact that $F$ meets the hypothesis $(ii)$ and $(fw_0,fw_1)\in E(G)$ with $fw_1\in Fw_0$, then the previous inequation $(\ref{eqn_1st})$ leads to $(fw_1,fw_2)\in E(G)$.  
		%    The previous inequation $(\ref{eqn_1st})$ leads to $(fw_1,fw_2)\in E(G)$, on the account of $F$ meets $(\ref{main})$ and $(fw_0,fw_1)\in E(G)$ with $fw_1\in Fw_0$.  
		%	According to the hypothesis, $F$ is generalized $G_f$-contraction and $(fw_0,fw_1)\in E(G)$ with $fw_1\in Fw_0$,  %with $d(fx_1,fx_2)\leq d(fx_0,fx_1)$, 
		%	then $(\ref{eqn_1st})$ leads to $(fw_1,fw_2)\in E(G)$. 
		Repeating the above methodology, for $\frac{1}{\sqrt{k(d(fw_1,fw_2))}}>1$ we get $fw_3\in F(w_2)$ fulfilling
		\begin{eqnarray*}
			d(fw_2,fw_3)&\leq& D(fw_2,Fw_2)+\left( \frac{1}{\sqrt{k(d(fw_1,fw_2))}}-1\right)D(fw_2,Fw_2)\nonumber\\
			&\leq& \frac{1}{\sqrt{k(d(fw_1,fw_2))}}k(d(fw_1,fw_2))d(fw_1,fw_2)< d(fw_1,fw_2)%\qquad[\text{$\because$ $(fx_1,fx_2)\in E(G)$}]\\
			%&\leq&\sqrt{k(d(fw_1,fw_2))}d(fw_1,fw_2)< d(fw_1,fw_2).
			%&\leq& d(fx_1,fx_2).
		\end{eqnarray*}
		Hence by the same argument as before we have $(fw_2,fw_3)\in E(G)$. Following this approach, we are able to construct two sequences $\{f(w_n)\}_{n}$, $\{F(w_n)\}_{n}$ in order that for every $n$, $(fw_{n-1},fw_n)\in E(G)$ and $f(w_n)\in F(w_{n-1})$ fulfils
		\begin{equation}\label{eqn2}
			d(fw_n,fw_{n+1})
			%&\leq&D(fw_{n},Fw_n)+\left( \frac{1}{\sqrt{k(d(fw_{n-1},fw_n))}}-1\right)D(fw_{n},Fw_n)\nonumber\\
			%	&\leq&\frac{1}{\sqrt{k(d(fw_{n-1},fw_n))}} k(d(fw_{n-1},fw_n))d(fw_{n-1},fw_n)\nonumber\\%[\text{$\because$ $(fx_{n-1},fx_n)\in E(G)$}]\nonumber\\
			\leq\sqrt{k(d(fw_{n-1},fw_n))}d(fw_{n-1},fw_n)<d(fw_{n-1},fw_n).
			%&\leq& d(fq_{n-1},fq_n).\nonumber
		\end{equation}
		Now for $n\geq 1$, we consider $s_n=d(fw_n,fw_{n+1})$. Consequently from $(\ref{eqn2})$, it is clearly observed that for each $n$, $s_n$ are non-negative and $s_{n+1}<s_n$.
		%Again from $(\ref{eqn2})$, it is clearly observed that the non-negative sequence $\{s_n\}_{n}$, where $s_n=d(fw_n,fw_{n+1})$ is monotone decreasing and bounded below. 
		Therefore $\{s_n\}_n$ is convergent and assume $s_n \to q$, where $q\geq 0$. On the account of $\limsup_{t\to q^+}k(t)<1$, there is $0\leq \alpha<1$ and $M\geq 1$ meeting the condition $k(s_n)\leq \alpha$ for each $n\geq M$. Now for every $n> M$, the inequation $(\ref{eqn2})$ leads to
		\begin{eqnarray}\label{bounded}
			d(fw_n,fw_{n+1})&\leq&\alpha^{(n-M)/2}\prod_{j=1}^{M-1}\sqrt{k(d(fw_{j-1},fw_{j}))}d(fw_{0},fw_1)\nonumber\\
			%\left[\sqrt{\alpha(d(fx_{N-2},fx_{N-1}))}\cdots \sqrt{\alpha(d(fx_{0},fx_1))}\right]d(fx_{0},fx_1)\\
			&<&B\alpha^{n/2}d(fw_{0},fw_1),
		\end{eqnarray}
		where $B=\alpha^{-{M/2}}\prod_{j=1}^{M-1}\sqrt{k(d(fw_{j-1},fw_{j}))}$.
		%	 If $q>0$, then from the equation $(\ref{eqn2})$,
		%	\begin{equation*}
			%		q\leq \sqrt{\limsup_{s_{n-1}\to q}k(s_{n-1})}\,q<q,
			%	\end{equation*} 
		%	this leads to the conclusion that $q=0$. That is $s_n\to 0$, while $n\to \infty$. Again for each $n$, form $(\ref{eqn2})$ it yields  
		%	\begin{eqnarray}\label{eqn3}
			%		d(fw_n,fw_{n+1})%&\leq&\sqrt{\alpha(d(fp_{n-1},fp_n))}d(fp_{n-1},fp_n)\nonumber\\
			%		%&\leq&\sqrt{\alpha(d(fx_{n-1},fx_n))}\sqrt{\alpha(d(fx_{n-2},fx_{n-1}))}d(fx_{n-2},fx_{n-1})\nonumber\\
			%		%& &\cdots \cdots \cdots \nonumber \\
			%		&\leq&\prod_{i=0}^{n}\sqrt{k(d(fw_{i-1},fw_i))}d(fw_{0},fw_1).
			%		%\left[\sqrt{\alpha(d(fx_{n-1},fx_n))}\cdots \sqrt{\alpha(d(fx_{0},fx_1))}\right]d(fx_{0},fx_1)
			%	\end{eqnarray}
		%	Since $k\in U$, then for every $0<t<\beta$, $k(t)<\alpha^2$ where $\alpha\in (0,1)$. Again we obtain $N\in \mathbb{N}$ in order that $d(fw_{n-1},fw_n)<\beta$, for each $n\geq N$ on account of $\{s_n\}_n$ converges to $0$. Consequently $(\ref{eqn3})$ implies,
		%	\begin{equation}\label{bounded}
			%		d(fw_n,fw_{n+1})\leq\alpha^{n-(N-1)}\prod_{j=1}^{N-1}\sqrt{k(d(fw_{j-1},fw_{j}))}d(fw_{0},fw_1)
			%		%\left[\sqrt{\alpha(d(fx_{N-2},fx_{N-1}))}\cdots \sqrt{\alpha(d(fx_{0},fx_1))}\right]d(fx_{0},fx_1)\\
			%		<\alpha^{n-(N-1)}d(fw_{0},fw_1). 
			%	\end{equation}
		Now using $(\ref{bounded})$ and for any arbitrary positive integer $m$ and $n>M$,
		\begin{equation*}
			\begin{aligned}
				d(fw_n,fw_{n+m})&\leq d(fw_{n},fw_{n+1})+d(fw_{n+1},fw_{n+2})+\cdots+\\
				& \qquad d(fw_{n+m-1},fw_{n+m})\\
				%&\leq&\beta^{n-(N-1)}d(fx_{0},fx_1)+\beta^{n-(N-2)}d(fx_{0},fx_1)+\cdots+\beta^{n-(N-k)}d(fx_{0},fx_1)\\
				&\leq B\alpha^{n/2}\left[1+\alpha^{1/2}+ \cdots+ \alpha^{(m-1)/2}\right]d(fw_{0},fw_1)\\
				%&\leq B \alpha^{\frac{n}{2}}\frac{1-\alpha^{m/2}}{1-\alpha^{1/2}}d(fw_{0},fw_1)
				&\leq B\frac{\alpha^{n/2}}{1-\alpha^{1/2}}d(fw_{0},fw_1).
			\end{aligned}
		\end{equation*}
		Thus the sequence $\{fw_n\}_{n}\subset fW$ turns out to be Cauchy, on the account of $\alpha^{n/2}\to 0$ as $n\to \infty$. 
		%    As $n\to \infty$, $\alpha^\frac{n}{2}\to 0$ asserts that $\{fw_n\}_{n}\subset fW$ is Cauchy.
		Since $fW$ is complete, then $\{fw_n\}_{n}$ is converges to $fw^*$, for some $w^*\in W$.
		Again for every $n$, $(fw_{n-1},fw_n)\in E(G)$ and $fw_n\in Fw_{n-1}$, it yields from $(i)$ that $D(fw_n,Fw_n)\leq k(d(fw_{n-1},fw_n))d(fw_{n-1},fw_n)<d(fw_{n-1},fw_n)$. Hence $\lim_{n \rightarrow \infty}D(fw_n,Fw_n)=0$.

		On the account of $(fw_{n-1},fw_n)\in E(G)$ with $fw_{n}\in Fw_{n-1}$ for each $n$ and $fw_n\to fw^*$, consequently by the hypothesis there is a subsequence $\{fw_{n_r}\}_{r\in \mathbb{N}}$ having  $D(fw^*,Fw^*)\leq \liminf_{r\to \infty}D(fw_{n_r},F(w_{n_r}))$. Thus we can conclude that $D(fw^*,Fw^*)=0$. Due to the fact that $Fw^*$ is closed, therefore $fw^*\in Fw^*$.
		
		Now let $a=fw^*$, then the hypothesis $fw^*=ffw^*$ indicates that $fa=a$. Again $f$ and $F$ are weakly commuting at $w^*$, hence $ffw^*\in Ffw^*$ leads to $a=fa\in Fa$. 
	\end{proof}
	%%%%%%%%%%%%%%%%%%%%%%%%%%%%%%%%%%%%%%%%%%%%%%%%%%%%%%%%5
	%%%%%%%%%%%%%%%%%%%%%%%%%%%%%%%%%%%%%%%%%%%%%%%%%%%%%%%%%%%%%%%%%%%%%%%%%%%%%%%%%%%%
	\begin{remark}
		Take a graph $G$ on the set $W$ in order that $V(G)=W$ and $E(G)= W\times W$. Consequently, the aforementioned Theorem \ref{feng} due to Feng-Liu \cite{feng} is followed from our main Theorem \ref{thm3.2} by considering the map $f$ is an identity map on $W$ and for each $0\leq t<\infty$, $k(t)=\theta$, where $\theta\in [0,1)$.
	\end{remark}
	%%%%%%%%%%%%%%%%%%%%%%%%%%%%%%%%%%%%%%%%%%%%%%%%%%%%%%%%%%%%%%%%%%%5
	
	The succeeding corollary deals with common fixed points for set-valued generalized contraction in a metric space, which is indeed an extension of the Theorem \ref{feng} due to Feng-Liu \cite{feng}. Moreover, this result is followed from the aforementioned Theorem \ref{thm3.2} by choosing a graph $G$ on $W$ in order that $V(G)=W$ and $E(G)=W\times W$.  
	\begin{corollary}\label{cor}
		For the metric space $(W,d)$, the maps $F:W \to CL(W)$, $f:W\to W$ in order that for each $v\in W$, $fw\in Fv$ fulfils,
		\begin{equation*}
			D(fw,Fw)\leq k(d(fv,fw))d(fv,fw), ~\textrm{where}~ k\in U.
		\end{equation*}
		Suppose that the function $h: W\to \mathbb{R}$ by $h(v)=D(fv,Fv)$ is lower semi-continuous. Then occurrence of $w^*$ having $fw^*\in Fw^*$ is guaranteed if for each $u\in W$, $Fu\subset fW$ and $(fW,d)$ is complete.
		
		Moreover, $f$ and $F$ possess a common fixed point whenever the map $f$ is weakly commuting at $w^*$ along with $fw^*=ffw^*$.
	\end{corollary}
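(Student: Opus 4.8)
The plan is to obtain Corollary~\ref{cor} as the special case of Theorem~\ref{thm3.2} in which $G$ is the complete graph on $W$, that is, $V(G)=W$ and $E(G)=W\times W$, so that every edge-constraint becomes vacuous. I would then check, one at a time, that the hypotheses of Theorem~\ref{thm3.2} are met. Since $E(G)=W\times W$, the phrase ``$fw\in Fv$ with $(fv,fw)\in E(G)$'' collapses to ``$fw\in Fv$'', so condition $(i)$ of Theorem~\ref{thm3.2} is exactly the displayed contraction assumed in the corollary, and condition $(ii)$ holds automatically because every pair $(fw,fp)$ already lies in $E(G)$. For the seed data, fix an arbitrary $w_0\in W$; as $F(w_0)$ is a non-void closed set we may take any $p_0\in F(w_0)$, and then $(fw_0,p_0)\in E(G)$ trivially, so the initial hypothesis of Theorem~\ref{thm3.2} is satisfied. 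The completeness of $(fW,d)$ and the inclusion $Fu\subset fW$ are assumed verbatim in the corollary.

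The only hypothesis of Theorem~\ref{thm3.2} carrying real content is its closedness requirement on admissible sequences, and this is where the lower semicontinuity of $h(v)=D(fv,Fv)$ enters. So let $\{fu_n\}_n\subset W$ satisfy $fu_n\in Fu_{n-1}$ and $(fu_{n-1},fu_n)\in E(G)$ for every $n$, with $fu_n\to fu$. Because $\{fu_n\}$ converges it is Cauchy, and $fu_n\in Fu_{n-1}$ gives $h(u_{n-1})=D(fu_{n-1},Fu_{n-1})\le d(fu_{n-1},fu_n)\to 0$, hence $h(u_n)\to 0$. Applying the lower semicontinuity of $h$ then yields $D(fu,Fu)=h(u)\le\liminf_{n\to\infty}h(u_n)=0$, so the inequality $D(fu,Fu)\le\liminf_{r}D(fu_{n_r},Fu_{n_r})$ holds already with the whole sequence as the chosen subsequence (and, $Fu$ being closed, in fact $fu\in Fu$). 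With all hypotheses verified, Theorem~\ref{thm3.2} produces $w^*$ with $fw^*\in Fw^*$ as the limit of a sequence $\{fw_n\}$ with $fw_n\in Fw_{n-1}$, and the ``moreover'' clause of the corollary is word-for-word the ``moreover'' clause of Theorem~\ref{thm3.2}.

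The step I expect to require the most care is precisely the use of lower semicontinuity in $D(fu,Fu)=h(u)\le\liminf_n h(u_n)$: Theorem~\ref{thm3.2} hands us only the convergence of the images $fu_n\to fu$, whereas lower semicontinuity of $h$, as a function of the variable $v\in W$, is phrased in terms of convergence of points of $W$. Closing this gap---by strengthening the regularity assumed of $f$ so that $fu_n\to fu$ can be promoted to $u_n\to u$, or by reading the hypothesis directly as a semicontinuity condition on the value $D(fv,Fv)$ attached to each image $fv$---is the only non-bookkeeping part of the argument; everything else is a routine specialization of Theorem~\ref{thm3.2} to the complete-graph setting.
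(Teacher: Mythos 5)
Your reduction is exactly the paper's: the paper offers no separate proof of Corollary~\ref{cor} beyond the one-line remark that it follows from Theorem~\ref{thm3.2} by taking $V(G)=W$ and $E(G)=W\times W$, and your verification of conditions $(i)$, $(ii)$, the seed data, and the completeness/inclusion hypotheses is the intended (and correct) bookkeeping.

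The one point of substance is the mismatch you flag in your last paragraph, and you are right that it is a genuine issue rather than a cosmetic one: lower semicontinuity of $h(v)=D(fv,Fv)$ is a statement about sequences $u_n\to u$ in $W$, while the sequential hypothesis of Theorem~\ref{thm3.2} only supplies $fu_n\to fu$, and in the theorem's proof $w^*$ is merely \emph{some} preimage of $\lim_n fw_n$ under $f$ --- nothing forces the points $u_n$ themselves to converge, let alone to converge to that particular preimage. The paper does not address this; the gap disappears when $f$ is the identity (which is why the corollary does recover Theorem~\ref{feng} of Feng--Liu), but for general $f$ the stated lsc hypothesis does not literally imply the hypothesis of Theorem~\ref{thm3.2}. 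The clean fix is the second of the two you suggest: read the hypothesis as lower semicontinuity of the map $fv\mapsto D(fv,Fv)$ on $fW$ (equivalently, require that $fu_n\to fu$ implies $D(fu,Fu)\le\liminf_n D(fu_n,Fu_n)$), after which your computation $h(u_n)\to 0$ immediately gives $D(fu,Fu)=0$ and hence $fu\in Fu$ by closedness of $Fu$. With that reading made explicit, your argument is complete; as written, both your proof and the paper's rely on an unstated identification between convergence in $W$ and convergence in $fW$.
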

	
	%%%%%%%%%%%%%%%%%%%%%%%%%%%%%%%%%%%%%%%%%%%%%%%%%%%%%%%%%%%%%%%%%%%%%%%%%%%%
	
	The below-stated example illustrates the preceding Theorem \ref{thm3.2}.
	\begin{example}\label{example}
		Consider a set $W=\{0,1\}\cup\left\{\frac{1}{3^n}:n\in \mathbb{N}\right\}$ together with usual metric. Suppose that a single valued map $f:W\to W$ by
		\begin{center}
			$f(w)=
			\begin{cases}
				~\frac{1}{3^{n+1}} &\mbox{if}~~w=\frac{1}{3^n},~n\in \mathbb{N}\cup\{0\}\\
				~0~&\mbox{if}~~w=0.
			\end{cases}$
		\end{center}
		and set-valued map $F: W\to CL(W)$ is defined as
		\begin{center}
			$F(w)=
			\begin{cases}
				~\left\{0,\frac{1}{3}\right\} &\mbox{if}~~w=0,\\
				~\left\{\frac{1}{3},\frac{1}{3^{n+2}}\right\} &\mbox{if}~~w=\frac{1}{3^n},~\text{where}~ n\in \mathbb{N},\\
				~\left\{0\right\}~&\mbox{if}~~w=1,
			\end{cases}$
		\end{center} 
		Take a graph $G$ on $W$ in order that $V(G)=W$ and $E(G)=\{(v,w)\in W\times W: d(v,w)< \frac{1}{9}\}$. Then, $G$ does not have any parallel edges and $E(G)$ contains the set $\Delta$. Now for $v=\frac{1}{3^n}$ and $fw=\frac{1}{3^{n+2}}\in Fv$, we see that for each $n$, $(fv,fw)\in E(G)$. Further,
		\begin{equation*}
			D(fw,Fw)=D\left(\frac{1}{3^{n+2}},\left\{\frac{1}{3},\frac{1}{3^{n+3}}\right\}\right)=\frac{1}{3^{n+2}}-\frac{1}{3^{n+3}}\leq \frac{1}{3}d(fv,fw).
		\end{equation*} 
		Again, for $v=0$ and $fw=0\in Fv$, the pair $(fv,fw)\in E(G)$. Also, it is easy to check that $D(fw,Fw)\leq \frac{1}{3}d(fv,fw)$. Thus we conclude that for each $v\in W$, $fw\in Fv$ with $(fv,fw)\in E(G)$ fulfils $D(fw,Fw)\leq k(d(fv,fw))d(fv,fw)$, where $k(t)=\frac{1}{3}$ for each $t\in [0,\infty)$. Furthermore, if $fp\in Fw$ with $d(fw,fp)\leq d(fv,fw)$, then $(fw,fp)\in E(G)$.
		Now for choosing $w_0=\frac{1}{3}$, we observe that $\frac{1}{27}\in Fw_0$ and $(fw_0,\frac{1}{27})\in E(G)$. It is clear to observe that the map $h:W\to \mathbb{R}$ by $h(w)=D(fw,Fw)$ is lower semi-continuous. Further, $(fW,d)$ is complete and for each $u\in W$, $Fu\subset fW$. Thus all the criteria of the Theorem \ref{thm3.2} is fulfilled, which confirm the occurrence of coincidence point $w^*$. We visualize that $0$ is a coincidence point of $f$ and $F$, that is, $f0\in F0$. Again $ff0=f0$ and $ff0\in Ff0$. We note that $0$ is a common fixed point of $f$ and $F$.
		
		On the other side, if we choose $v=\frac{1}{3^n}$ for each $n\geq 1$ and $fw=\frac{1}{3}\in Fw$, then $d(fv,fw)=\frac{1}{3}-\frac{1}{3^{n+1}}$ and $D(fw,Fw)=\frac{1}{3}$. Therefore for such chosen points $D(fw,Fw)>d(fv,fw)$. This indicates that the Theorem \ref{thm3.2} is indeed an extension of the Corollary \ref{cor}.  
	\end{example}	
	%%%%%%%%%%%%%%%%%%%%%%%%%%%%%%%%%%%%%%%%%%%%%%%%%%%%%%%%%%%%%%%%
	\begin{remark}
		It is worth to observe that the preceding Theorem \ref{thm3.2} is an extension of the Theorem \ref{thm1} due to Kamran \cite{kamran}. Indeed, consider $W$ incorporates with a graph $G$ having $V(G)=W$ and $E(G)=W\times W$. Let $v\in W$, $fw\in Fv$ with $(fv,fw)\in E(G)$, then it follows
		\begin{eqnarray*}
			D(fw,Fw)\leq H(Fv,Fw)&\leq& k(d(fv,fw))d(fv,fw)+M\,D(fw,Fv)\\
			&=& k(d(fv,fw))d(fv,fw),~\textrm{where}~ k\in U.
		\end{eqnarray*}
		Moreover, for the chosen graph $G$, it always follows that $(fw,fp)\in E(G)$.
		Also we are able to find $w_0,p\in W$ in order that $(fw_0,p)\in E(G)$ and $p\in Fw_0$. Now assume a $\{fw_n\}_n\in W$ in order that $fw_n\in Fw_{n-1}$ for every $n$, and $fw_n\to fw$. Evidently for each $n$, $(fw_n,fw_{n+1})\in E(G)$. Consequently for each $n$ and $fz\in Fw_n$,
		\begin{equation*}
			\begin{aligned}
				D(fw,Fw)%&\leq& d(fw,fw_n)+d(fw_n,fz)+D(fz,Fw)\\
				&\leq  d(fw,fw_n)+d(fw_n,fz)+H(Fw_n,Fw)\\
				&\leq d(fw,fw_n)+ D(fw_n,Fw_n)+ d(fw,fw_n)+ M\,D(fw,Fw_n)\\
				%\textrm{[by (\ref{eqn1})]}\\
				&\leq d(fw,fw_n)+ D(fw_n,Fw_n)+ d(fw,fw_n).~ \textrm{[as $fw_n\in F(w_{n-1})]$}
			\end{aligned} 
		\end{equation*}
		%for some $fz\in Fw_n$. As the map $F$ meets $(\ref{eqn1})$, then we obtain that $D(fw,Fw)\leq d(fw,fw_n)+ D(fw_n,Fw_n)+ d(fw,fw_n)+ MD(fw,Fw_n)$. Since $fw_n\in F(w_{n-1})$, then the last inequality yields that $D(fw,Fw)\leq d(fw,fw_n)+ D(fw_n,Fw_n)+ d(fw,fw_n)$.
		Now taking $n\to \infty$ it yields, $D(fw,Fw)\leq\liminf_{n\to \infty} D(fw_n,Fw_n)$. Hence all the criteria of the Theorem \ref{thm3.2} is fulfilled.  
	\end{remark}
	%%%%%%%%%%%%%%%%%%%%%%%%%%%%%%%%%%%%%%%%%%%%%%%%%%%%%%%%%%%%%%%%%%%%
	
	The below-mentioned example demonstrates that, the Theorem \ref{thm3.2} indeed an extension of Theorem \ref{thm1} due to Kamran \cite{kamran}.
	%%%%%%%%%%%%%%%%%%%%%%%%%%%%%%%%%%%%%%%%%%%%%%%%%%%%%%%%%%%%%%%%%%%%%%%%%%%%%%
	\begin{example}
		In the Example \ref{example}, choose two points $0$ and $1$ in $W$. Then we see that $H(F0,F1)=\frac{1}{3}$, $d(f0,f1)=\frac{1}{3}$ and $D(f1,F0)=0$. Therefore for all $k\in U$ and $M\geq 0$, $H(F0,F1)=d(f0,f1)> k(d(f0,f1))d(f0,f1)+M\,D(f1,F0)$. This implies that $F$ does not fulfils the equation (\ref{eqn1})  of the Theorem \ref{thm1} due to Kamran. 
	\end{example}
	%%%%%%%%%%%%%%%%%%%%%%%%%%%%%%%%%Example%%%%%%%%%%%%%%%%%%%%%%%%%%%%%%

	%%%%%%%%%%%%%%%%%%%%%%%%%%%%%%%%%%%%%%%%%%%%%%%%%%%%%%%%%%%%%%%%%%%%%%%%%%
	\begin{remark}
		The Theorem \ref{mam} due to Sultana and Vetrivel \cite{mam_feng} follows
		from our Theorem \ref{thm3.2} by choosing $f$ to be an identity map.
	\end{remark}
	%%%%%%%%%%%%%%%%%%%%%%%%%%%%%%%%%%%%%%%%%%%%%%%%%%%%%%%%%%%%%%%%
	
	Let the set $W$ be endowed with a norm $\|.\|$ and $Q\subseteq W$ with $Q\neq \emptyset$. Now for $z\in W$, a set $B_Q(z)$ contains $y\in Q$ in order that $D(z,Q)=\|y-z\|$. Then the set $B_Q(z)$ is called as a best $Q$-approximates of $z$ over $Q$. Invariant best approximation problems can be solved through the perception of common fixed points for single valued mappings, as one may see in \cite{thagafi,pathak}. Also in \cite{kamran1,kumam}, the authors established invariant best approximation results for a couple of single valued and set-valued mappings. In the succeeding theorem, we investigate invariant best approximation for single valued as well as set-valued maps meeting the condition $(i)$ of the Theorem \ref{thm3.2} on a normed linear space without considering the Hausdorff distance function. 
	%%%%%%%%%%%%%%%%%%%%%%%%%%%%%%%%%%%%%%%%%%%%%%%%%%%%%%%%%%%%%%%%%%%%%%%
	\begin{theorem}
		Suppose that $(W,\|.\|)$ is a normed linear space and $Q\subseteq W$ such that $Q\neq \emptyset$. Assume that the mappings $f:Q\to Q$ and $F:Q \to CL(Q)$ fulfils the following:
		\begin{enumerate}
			\item[(i)] for every $v\in B_Q(z)$ and $fw\in Fv$ fulfils $D(fw,Fw)\leq k(\|fv-fw\|)\|fv-fw\|$ where $k \in U$, 
			
			\item[(ii)] $f(B_Q(z))=B_Q(z)$ and $B_Q(z)$ is complete,
			
			%\item[(iii)]  $f(B_D(z))$ is a complete subspace of $B_D(z)$,
			\item[(iii)] $\sup_{u\in Fp}\|u-z\|\leq \|fp-z\|$ for each $p\in B_Q(z)$. 
		\end{enumerate}
		Then $w^*\in Coin(f,F)\cap B_Q(z)$ if for any $\{fw_n\}_n\in B_Q(z)$ with $fw_n\in Fw_{n-1}$ and $fw_n\to fw$ implies $D(fw,Fw)\leq \liminf_{n\to \infty}D(fw_n,Fw_n)$. Furthermore, $Fix(f,F)\cap B_Q(z)\neq \emptyset$ if $ffw^*=fw^*$ and $ffw^*\in Ffw^*$. 
	\end{theorem}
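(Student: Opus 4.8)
The plan is to reduce the statement to the already-established Theorem \ref{thm3.2} applied on the metric space $(B_Q(z),d)$, where $d$ is the metric induced by $\|\cdot\|$ and $B_Q(z)$ is equipped with the complete graph $G$ given by $V(G)=B_Q(z)$, $E(G)=B_Q(z)\times B_Q(z)$ (so $\Delta\subseteq E(G)$ and $G$ has no parallel edges, as required). The only substantive point is to check that $F$ restricts to a map $B_Q(z)\to CL(B_Q(z))$ with the correct range; everything else is a direct translation of hypotheses.

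First I would show $Fp\subseteq B_Q(z)$ for every $p\in B_Q(z)$. Since $f(B_Q(z))=B_Q(z)$ by (ii), we have $\|fp-z\|=D(z,Q)$, so (iii) gives $\|u-z\|\leq D(z,Q)$ for all $u\in Fp$; as $Fp\subseteq Q$ forces $\|u-z\|\geq D(z,Q)$, we conclude $\|u-z\|=D(z,Q)$, i.e. $Fp\subseteq B_Q(z)$. Being closed in $Q$, the set $Fp$ is then closed in the subspace $B_Q(z)$, so $F|_{B_Q(z)}\colon B_Q(z)\to CL(B_Q(z))$ is well defined; moreover $Fu\subseteq B_Q(z)=f(B_Q(z))$ for each $u\in B_Q(z)$, which is exactly the range condition ``$Fu\subset fW$'' needed by the theorem (with $W$ replaced by $B_Q(z)$). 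Also $f$ maps $B_Q(z)$ into $B_Q(z)$ by (ii), and $(B_Q(z),d)$ is complete by (ii); here I use that $B_Q(z)\neq\emptyset$.

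Next I would verify the remaining hypotheses of Theorem \ref{thm3.2} on $(B_Q(z),d)$ with the complete graph. Condition (i) of that theorem is precisely hypothesis (i) here, since every pair is an edge; condition (ii) there holds trivially for the complete graph, as does the requirement of a starting pair $(fw_0,p_0)\in E(G)$ with $p_0\in Fw_0$ (pick any $w_0\in B_Q(z)$ and any $p_0\in Fw_0$). Finally, for any $\{fw_n\}_n\subseteq B_Q(z)$ with $(fw_{n-1},fw_n)\in E(G)$ (automatic), $fw_n\in Fw_{n-1}$ and $fw_n\to fw$, the assumption imposed in the present theorem gives $D(fw,Fw)\leq\liminf_{n\to\infty}D(fw_n,Fw_n)$, and taking the full sequence as the subsequence yields the $\liminf$-condition of Theorem \ref{thm3.2}. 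Hence Theorem \ref{thm3.2} produces $w^*\in B_Q(z)$ with $fw^*\in Fw^*$, i.e. $w^*\in Coin(f,F)\cap B_Q(z)$.

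For the last assertion I would invoke the ``moreover'' part of Theorem \ref{thm3.2}: with $a=fw^*$, the hypothesis $ffw^*=fw^*$ gives $fa=a$, and $F$-weak commutativity $ffw^*\in Ffw^*$ gives $a=fa\in Fa$; since $w^*\in B_Q(z)$ and $f(B_Q(z))=B_Q(z)$ we also have $a=fw^*\in B_Q(z)$, so $a\in Fix(f,F)\cap B_Q(z)$. I do not expect a genuine obstacle here: the whole argument is bookkeeping, and the one place requiring care is the well-definedness step where (iii) together with $f(B_Q(z))=B_Q(z)$ is used to force $Fp\subseteq B_Q(z)$, after which the conclusion follows verbatim from Theorem \ref{thm3.2}.
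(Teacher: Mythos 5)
Your proposal is correct and follows essentially the same route as the paper: restrict $f$ and $F$ to $B_Q(z)$ equipped with the complete graph, use (ii) and (iii) to show $Fp\subseteq B_Q(z)$ (hence $F|_{B_Q(z)}:B_Q(z)\to CL(B_Q(z))$ with the range condition satisfied), and then invoke Theorem \ref{thm3.2} with $W=B_Q(z)$. You are in fact slightly more careful than the paper in noting that $\|u-z\|\ge D(z,Q)$ is also needed to place $u$ in $B_Q(z)$, and in explicitly checking the range condition $Fu\subseteq f(B_Q(z))$.
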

	%%%%%%%%%%%%%%%%%%%%%%%%%%%%%%%%%%%%%%%%%%%%%%%%%%%%%%%%%%%%%%%%%%
	\begin{proof}
		Consider a graph $G$ on $B_Q(z)$ in order that $V(G)=B_Q(z)$ and $E(G)=B_Q(z)\times B_Q(z)$. Now assume an element $p$ lies in $B_Q(z)$ and $w\in Fp$. Then we have $fp\in B_Q(z)$ for every $p\in B_Q(z)$, on the account of $f(B_Q(z))=B_Q(z)$.
		Therefore we get $\|fp-z\|=D(z,Q)$, by using the definition of $B_Q(z)$. Again the condition $(iii)$ yields that $$\|w-z\|\leq \sup_{u\in Fp}\|u-z\|\leq \|fp-z\|=D(z,Q).$$	
		%Thus $\|w-q\|=d(q,E)$,
		Hence $w\in B_Q(z)$. This indicates that  for each $p \in B_Q(z)$,  $Fp\subseteq B_Q(z)$. As the set $Fp$ is closed for each $p\in Q$, then $Fp$ is also closed for any $p \in B_Q(z)$. Subsequently, $F|_{ B_Q(z)}$ is a set-valued mapping from $B_Q(z)$ into $CL( B_Q(z))$ and $f|_{ B_Q(z)}: B_Q(z)\to  B_Q(z)$. Then eventually 
		$$Fix(f|_{ B_Q(z)},F|_{ B_Q(z)})=Fix(f,F)\cap B_Q(z).$$
		Thus this theorem is followed by applying our main Theorem \ref{thm3.2} under the assumption $W=B_Q(z)$. 	
	\end{proof}

	%%%%%%%%%%%%%%%%%%%%%%%%%%%%%%%%%%%%%%%%%%%%%%%%%%%%%%%%%%%%%%%%
	Jachymski \cite[Theorem 4.1]{Jach}, established the convergence of iterates for some linear operator with the perception of fixed point theory on a normed linear space, which is indeed an extension of Kelisky-Rivlin \cite[Theorem 1]{KR8} theorem about the convergence of iterates for Bernstein operator \cite{KR8}. Later, Sultana and Vetrivel \cite[Theorem 5]{mam_feng} improved this result on a complete normed linear space. The coming theorem deals with the convergence of iterates for certain operators in a normed linear space, where the operators need not to be linear. Essentially, our theorem generalizes the Sultana and Vetrivel's \cite[Theorem 5]{mam_feng} result.   
	
	%%%%%%%%%%%%%%%%%%%%%%%%%%%%%%%%%%%%%%%%%%%%%%%%%%%%%%%%%%%%%%%%%%%%%%%%%%%%55
	\begin{theorem}\label{iterates}
		For the complete normed linear space $(W,\|.\|)$, let $W_0\subset W$ be a closed subspace. A map $T:W\to W$ with $(I-T)(W)\subseteq W_0$ and for $w-Tw\in W_0$,
		\begin{equation}\label{convergence}
			\|Tw-T^2w\|\leq k(\|w-Tw\|)\|w-Tw\|\quad\text{where}~k\in U.
		\end{equation} 
		Then for every $w\in W$, $\lim_{n \rightarrow \infty}T^nw\in \{w^*\in W: w^*=Tw^*\}\cap (w+W_0)$ if $h:W\to \mathbb{R}$ by $h(u)=D(u,Tu)$ is lower semicontinuous.
	\end{theorem}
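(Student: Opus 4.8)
The plan is to fix an arbitrary $w\in W$, restrict attention to the closed affine set $X:=w+W_0$, and then invoke Theorem \ref{thm3.2} on $X$ with $f$ taken to be the identity and the set-valued map taken to be $u\mapsto\{Tu\}$; the coincidence point produced by that theorem will be a fixed point of $T$ in $w+W_0$, and the sequence produced inside its proof will be forced to be the orbit $\{T^nw\}$.

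First I would record the two structural facts that make the localization legitimate. Since $W_0$ is a closed subspace of the Banach space $W$, the translate $X=w+W_0$ is closed, hence $(X,\|\cdot\|)$ is complete. Moreover $X$ is $T$-invariant: for any $u\in X$ we have $u-Tu\in(I-T)(W)\subseteq W_0$, so $Tu=u-(u-Tu)\in u+W_0=w+W_0=X$. I would then equip $X$ with the complete digraph $G$ given by $V(G)=X$, $E(G)=X\times X$ (which contains $\Delta$ and has no parallel edges), put $f:=\mathrm{id}_X$ and define $F\colon X\to CL(X)$ by $Fu:=\{Tu\}$; each $Fu$ is a nonempty closed subset of $X$ and $Fu\subset fX=X$.

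Next I would check the hypotheses of Theorem \ref{thm3.2} for this triple. For $v\in X$ the only $w'\in X$ with $fw'\in Fv$ is $w'=Tv$, and then $D(fw',Fw')=\|Tv-T^2v\|$ while $d(fv,fw')=\|v-Tv\|$; since $v-Tv\in W_0$, inequality \eqref{convergence} gives $D(fw',Fw')\le k(d(fv,fw'))\,d(fv,fw')$, so condition (i) holds, and condition (ii) is vacuous because $E(G)$ is the full product. Taking $w_0=w$ and $p_0=Tw\in F(w_0)$ supplies the required starting edge $(fw_0,p_0)\in E(G)$, using invariance of $X$. For the limit condition, if $\{fu_n\}=\{u_n\}\subset X$ satisfies $u_n\in Fu_{n-1}$ and $u_n\to u$, then lower semicontinuity of $h(x)=\|x-Tx\|=D(x,Tx)$ yields $D(fu,Fu)=h(u)\le\liminf_n h(u_n)=\liminf_n D(fu_n,Fu_n)$, so the whole sequence serves as the required subsequence. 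Hence Theorem \ref{thm3.2} applies and delivers a sequence $\{fw_n\}$ with $fw_n\in Fw_{n-1}$, $fw_n\to fw^*$, and $fw^*\in Fw^*$ for some $w^*\in X$.

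Finally I would identify this sequence with the iteration of $T$: since $f=\mathrm{id}$ and $F$ is single-valued, the relation $fw_n\in Fw_{n-1}$ forces $w_n=Tw_{n-1}$, so with $w_0=w$ we get $w_n=T^nw$ for all $n$, and $fw^*\in Fw^*$ reads $w^*=Tw^*$. Thus $T^nw\to w^*$ with $w^*=Tw^*\in X=w+W_0$, which is precisely the claim. I do not expect a real obstacle here: the only substantive points are the observation that $(I-T)(W)\subseteq W_0$ makes every coset $w+W_0$ invariant under $T$, so that the fixed point machinery can be run on a complete set, and the remark that for a single-valued $F$ with $f=\mathrm{id}$ the sequence manufactured in the proof of Theorem \ref{thm3.2} can only be the Picard iteration $\{T^nw\}$; the rest is routine verification.
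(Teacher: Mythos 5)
Your proposal is correct, and it reaches the conclusion by a slightly different route than the paper. The paper keeps the ambient space equal to all of $W$, encodes the constraint $v-Tv\in W_0$ through the graph $E(G)=\{(v,w)\in W\times W: v-w\in W_0\}$ (so that condition (i) of Theorem \ref{thm3.2} is exactly the hypothesis \eqref{convergence} and condition (ii) is supplied by $(I-T)(W)\subseteq W_0$), and then proves $\lim_n T^nw\in w+W_0$ by a separate argument: $w-T^nw=\sum_{j=0}^{n-1}(I-T)(T^jw)\in W_0$ for every $n$ because $W_0$ is a subspace, and closedness of $W_0$ passes this to the limit. You instead localize from the start to the closed, $T$-invariant coset $X=w+W_0$, take the complete graph on $X$, and let Theorem \ref{thm3.2} deliver the fixed point already inside $X$; the graph structure then plays no role, and the coset membership of the limit comes for free. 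What the paper's setup buys is a single graph that works simultaneously for every starting point $w$ and a proof that genuinely exercises the graph-theoretic hypotheses of the main theorem; what yours buys is a cleaner verification (condition (ii) and the final coset argument become vacuous) at the cost of re-instantiating the ambient space for each $w$. Both versions correctly observe that $(I-T)(W)\subseteq W_0$ makes \eqref{convergence} applicable to every relevant pair, that the singleton-valued $F$ with $f=\mathrm{id}$ turns the sequence of Theorem \ref{thm3.2} into the Picard orbit $\{T^nw\}$, and that lower semicontinuity of $h$ supplies the required $\liminf$ condition; no step of your argument fails.
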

	%%%%%%%%%%%%%%%%%%%%%%%%%%%%%%%%%%%%%%%%%%%%%%%%%%%%%%%%%%%%
	\begin{proof}
		Take a graph $G$ on $W$ in order that $V(G)=W$ and $E(G)$ is the collection of $(v,w)\in W\times W$ having $v-w\in W_0$. Consider $f$ is an identity map on $W$ and a set valued map $F$ is chosen by $Fw=\{Tw\}$, for each $w\in W$. Let $v$ be any arbitrary point in $W$ and $w=fw\in Fv=\{Tv\}$ with $(fv,fw)=(v,w)\in E(G)$. Hence $(v,Tv)\in E(G)$, that is $v-Tv\in W_0$. Consequently by (\ref{convergence}), $\|fw-Fw\|=\|Tv-T(Tv )\|=\|Tv-T^2v\|\leq k(\|v-Tv\|)\|v-Tv\|$ where $k\in U$. On the account of $(I-T)(W)\subseteq W_0$, it yields that for $p=fp \in Fw=\{Tw\}$, we have $w-p=w-Tw\in W_0$, that is $(w,p)=(fw,fp)\in E(G)$. Hence condition (i) and (ii) of Theorem \ref{thm3.2} is fulfilled. Choose an element $w\in W$. Moreover, for $w=w_0\in W$, there is $p_0\in F(w_0)$ with $(fw_0,fp_0)\in E(G)$ due to the fact that $(I-T)(W)\subseteq W_0$.
		Thus for $w\in W$, applying the previous Theorem \ref{thm3.2} we obtain that $\lim_{n\to \infty}T^nw\in \{w^*\in W: w^*=Tw^*\}$.
		
		As $(I-T)W\subseteq W_0$, then it is easy observe that $w-T^nw\in W_0$ for each $n \geq 1$. On the account of $W_0$ is closed, we obtain $\lim_{n \rightarrow \infty}T^nw\in (w+W_0)$. Hence we conclude that  $\lim_{n \rightarrow \infty}T^nw\in \{w^*\in W: w^*=Tw^*\}\cap (w+W_0)$. 
	\end{proof}	
	
	%%%%%%%%%%%%%%%%%%%%%%%%%%%%%%%%%%%%%%%%%%%%%%%%%%%%%%%%%%%%%%%%%%%%%%%%%%%
	\section{Applications}
	In this part we provide some applications of our main Theorem \ref{thm3.2}. First we discuss about the occurrence of solution of a fractional differential equation and at the end of this article, we deduce the convergence of iterates for a nonlinear $q$-analogue Bernstein operator. 
	
	A fractional differential equation is a generalization of a differential equation, whose order lies in the set of natural numbers. A fractional derivative looks like $D^\beta$, where $\beta>0$ and the symbol ${}^cD^\beta$ denotes the Caputo fractional derivative \cite{baleanu} with order $\beta$. In fact if $\beta \in \mathbb{N}$, then fractional differential equation turns out to be a normal differential equation. The presence of solution for fractional differential equations are deduced through the perception of fixed point theory, (see \cite{baleanu,khan,gopal}).   
	In this article, we assume a generalized fractional differential equation
	\begin{equation}\label{eqn6}
		{}^cD^\beta w(b)+g(b,f(w(b)))=0~ \textrm{whenever}~~ w\in C[0,1],~ 0<b<1~\textrm{and}~1<\beta,
	\end{equation}
	having boundary criteria $w(0)=0$ and $w(1)=0$, where the maps $f:C[0,1]\to C[0,1]$ and $g:[0,1]\times C[0,1]\to \mathbb{R}$ are both continuous. The Green function $G(b,a)$ corresponding to the equation $(\ref{eqn6})$ takes the value $(b(1-a))^{\beta-1}-(b-a)^{\beta-1}$ whenever $0\leq a\leq b\leq 1$, and it attains the value $\frac{(b(1-a))^{\beta-1}}{\Gamma(\beta)}$ whenever $0\leq b\leq a\leq1$.  
	%\begin{center}
	%	$G(b,a)=
	%	\begin{cases}
		%		(b(1-a))^{\beta-1}-(b-a)^{\beta-1} &\mbox{if}~0\leq a\leq b\leq 1\\
		%		\frac{(b(1-a))^{\beta-1}}{\Gamma(\beta)} &\mbox{if}~ 0\leq b\leq a\leq1.
		%	\end{cases}$
	%\end{center}
	The collection of every continuous real-valued mappings on $[0,1]$ is represented by $C([0,1])$, which incorporates with $\|.\|=\max_{b\in [0,1]}|w(b)|$. By applying our main Theorem \ref{thm3.2}, in the succeeding result we establish the sufficient criteria for ensuring a solution of the above mentioned fractional differential equation $(\ref{eqn6})$.
	
	%%%%%%%%%%%%%%%%%%%%%%%%%%%%%%%%%%%%%%%%%%%%%%%%%%%%%%%%%%%%%%%%%%%%%%%%%%%%%%
	\begin{theorem}\label{thm4.1}
		Take the fractional differential equation as stated in $(\ref{eqn6})$. Assume that the succeeding criteria happen:
		\begin{enumerate}
			\item[(i)] for every $v,w\in C[0,1]$ and an element $b$ lies in $[0,1]$,
			\begin{equation*}
				\left|g(b,f(v(b)))-g(b,f(w(b)))\right|\leq k\left(\left\|fv-fw\right\|\right)\left|fv(b)-fw(b)\right|
			\end{equation*} 
		    where $k\in U$,
			\item[(ii)] for each $w\in C[0,1]$, $Fw\subseteq f\left(C[0,1]\right)$ where $F$ is a single valued mapping on $C[0,1]$, which is stated as
			\begin{equation*}
				F(w(b))=\int_0^1G(b,a)g(a,(fw)(a)) da.
			\end{equation*}
		\end{enumerate}
		Then equation $(\ref{eqn6})$ admits a solution if  $ f(C[0,1])$ is closed set in $C[0,1]$.
	\end{theorem}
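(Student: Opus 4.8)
The plan is to recast $(\ref{eqn6})$ as a coincidence point problem and then apply Theorem~\ref{thm3.2} (equivalently, Corollary~\ref{cor}). Recall the classical Green-function reformulation: a function $u\in C[0,1]$ satisfies the fractional equation in $(\ref{eqn6})$ together with the boundary conditions $u(0)=u(1)=0$ precisely when $u(b)=\int_0^1 G(b,a)\,g(a,u(a))\,da$ for all $b\in[0,1]$. Accordingly, take $W=C[0,1]$ with $\|u\|=\max_{b\in[0,1]}|u(b)|$, regard the single-valued operator $F$ of hypothesis~(ii) as the set-valued map $w\mapsto\{F(w(\cdot))\}$ from $W$ into $CL(W)$ (a singleton being closed), and equip $W$ with the admissible complete graph, in which every ordered pair of points of $W$ is an edge. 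A coincidence point $w^{*}$ of $f$ and $F$, that is $fw^{*}\in Fw^{*}$, then reads $fw^{*}(b)=\int_0^1 G(b,a)\,g\bigl(a,(fw^{*})(a)\bigr)\,da$, so $u:=fw^{*}$ satisfies $u(b)=\int_0^1 G(b,a)\,g(a,u(a))\,da$, the Green-function form of $(\ref{eqn6})$ with $u(0)=u(1)=0$, and is thus a solution of $(\ref{eqn6})$. It therefore suffices to verify the hypotheses of Theorem~\ref{thm3.2}.

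First I would check the contraction condition~(i). Fix $v\in W$ and $fw\in Fv$; since $F$ is single-valued, $fw=F(v(\cdot))$ and $D(fw,Fw)=\|F(v(\cdot))-F(w(\cdot))\|$. For each $b\in[0,1]$, hypothesis~(i) of the present theorem, together with $|fv(a)-fw(a)|\le\|fv-fw\|$, gives
\[
\bigl|F(v(b))-F(w(b))\bigr|\le\int_0^1 G(b,a)\,\bigl|g(a,fv(a))-g(a,fw(a))\bigr|\,da\le k(\|fv-fw\|)\,\|fv-fw\|\int_0^1 G(b,a)\,da .
\]
Invoking the standard bound $\sup_{b\in[0,1]}\int_0^1 G(b,a)\,da\le 1$ (valid since $\beta>1$) and taking the supremum over $b$, we obtain $D(fw,Fw)\le k(d(fv,fw))\,d(fv,fw)$ with $k\in U$, which is condition~(i). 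Condition~(ii) of Theorem~\ref{thm3.2}, and the existence of $w_{0}$ and $p_{0}\in F(w_{0})$ with the required edge, hold automatically because every ordered pair is an edge; the inclusion $Fu\subset fW$ for every $u$ is hypothesis~(ii); and $(fW,d)=(f(C[0,1]),d)$ is complete, being a closed subset of the complete space $C[0,1]$.

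It remains to secure the limit hypothesis of Theorem~\ref{thm3.2}, which I would derive from the continuity of $f$ and $g$. If $w_{n}\to w$ in $C[0,1]$, then $fw_{n}\to fw$ uniformly, so $g(\cdot,fw_{n}(\cdot))\to g(\cdot,fw(\cdot))$ uniformly by uniform continuity of $g$ on compacta, whence $F(w_{n}(\cdot))\to F(w(\cdot))$ in $C[0,1]$; consequently $h(w)=D(fw,Fw)=\|fw-F(w(\cdot))\|$ is continuous, in particular lower semicontinuous, and Corollary~\ref{cor} applies. (Equivalently, for a sequence $\{fu_{n}\}$ with $fu_{n}\in Fu_{n-1}$ and $fu_{n}\to fu$, passing to the limit gives $fu=\lim_{n}F(u_{n-1}(\cdot))=F(u(\cdot))$, so $D(fu,Fu)=0\le\liminf_{r}D(fu_{n_{r}},Fu_{n_{r}})$ already for the full sequence.) Hence Theorem~\ref{thm3.2} yields $w^{*}\in W$ with $fw^{*}\in Fw^{*}$, and by the first paragraph $u=fw^{*}$ solves $(\ref{eqn6})$; note that the ``Moreover'' (common fixed point) part of Theorem~\ref{thm3.2} is not needed here.

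The one genuinely computational point, and the place where care is needed, is the estimate $\sup_{b\in[0,1]}\int_0^1 G(b,a)\,da\le 1$: one splits the $a$-integral at $a=b$, evaluates the two resulting elementary integrals, and maximises the outcome over $b\in[0,1]$, using $\beta>1$. This estimate is exactly what makes the integral operator $F$ obey the $U$-contraction bound in condition~(i) and hence lets Theorem~\ref{thm3.2} be invoked; the remainder of the argument is continuity bookkeeping together with the choice of the complete graph, which trivialises the graph-theoretic hypotheses.
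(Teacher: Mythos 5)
Your proposal is correct and follows essentially the same route as the paper: recast $(\ref{eqn6})$ as the integral equation via the Green function, take the complete graph on $C[0,1]$, verify the contraction condition using $\int_0^1|G(b,a)|\,da\le 1$, check the limit (lower semicontinuity) hypothesis by a triangle-inequality/continuity argument, and invoke Theorem~\ref{thm3.2} to get a coincidence point $w^*$ with $fw^*=Fw^*$ solving the equation. Your treatment is, if anything, slightly more explicit than the paper's about why the Green-function integral is bounded by $1$ and about completeness of $f(C[0,1])$, but the underlying argument is the same.
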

	%%%%%%%%%%%%%%%%%%%%%%%%%%%%%%%%%%%%%%%%%%%%%%%%%%%%%%%%%%%%%%%%%%%%%%%%
	\begin{proof}
		We assume that $W=C[0,1]$ incorporate with a graph $G$ having $V(G)=W$ and $E(G)=\{(v,w):(v,w)\in W\times W\}$. Now we visualize that, a function $w$ lies in $W$ meets the equation $(\ref{eqn6})$, if and only if it fulfils the succeeding equation,
		\begin{equation}\label{eqn7}
			w(b)=\int_0^1G(b,a)g(a,(fw)(a))da.
		\end{equation}
		Let $v\in W$ and $fw\in Fv=\int_0^1G(b,a)g(a,(fv)(a))da$. Now
		\begin{eqnarray*}
			|fw(b)-Fw(b)|&= & \left|\int_0^1G(b,a)g(a,(fv)(a))da-\int_0^1G(b,a)g(a,(fw)(a))da\right|\\
			&\leq &\int_0^1\left|G(b,a)\right|\left|g(a,(fv)(a))-g(a,(fw)(a))\right|da\\
			&\leq &k\left(\left\|fv-fw\right\|\right)\int_0^1\left|G(b,a)\right|\left|fv(a)-fw(a)\right|da~\text{[from $(i)$]}\\ 
			&\leq &k\left(\left\|fv-fw\right\|\right)\|fv-fw\|\int_0^1\left|G(b,a)\right|da\\
			&\leq & k\left(\left\|fv-fw\right\|\right)\|fv-fw\|.
		\end{eqnarray*} 
		Therefore $\|fw-Fw\|\leq k(\|fv-fw\|)\|fv-fw\|$. Assume $\{fu_n\}_n \in W$ in order that $fu_n\to fu$. Consequently for each $n$,
		\begin{eqnarray*}
			|fu-Fu|&\leq &|fu-fu_n|+|fu_n-Fu_n|+|Fu_n-Fu|\\
			&\leq & \left|\int_0^1G(b,a)g(a,(fu_n)(a))da-\int_0^1G(b,a)g(a,(fu)(a))da\right|\\  & &~ + |fu-fu_n|+|fu_n-Fu_n|\\
			&\leq & |fu-fu_n|+|fu_n-Fu_n|+k(\|fu_n-fu\|)\|fu_n-fu\|\textrm{[by~ $(i)$]}.
		\end{eqnarray*}
		Therefore for each $n$, $\|fu-Fu\|\leq \|fu-fu_n\|+\|fu_n-Fu_n\|+\|fu_n-fu\|$. Hence $\|fu-Fu\|\leq\liminf_{n\to \infty}\|fu_n-Fu_n\|$. Then using Theorem \ref{thm3.2} we can conclude that $F$ and $f$ has a function $w^*(b)\in W$ in order that $Fw^*(b)=f(w^*(b))$. Hence $f(w^*(b))$ fulfils the equation $(\ref{eqn7})$, therefore the given equation $(\ref{eqn6})$ has a solution.
	\end{proof}
	%%%%%%%%%%%%%%%%%%%%%%%%%%%%%%%%%5q-analouge Bernstein%%%%%%%%%%%%%%%%%%%%%%%%
	
	For $n\in\mathbb{N}$ and $q>0$, Lupa\c{s} \cite{lupas} first presented the $q$-analogue Bernstein operator $L_{n,q}$ on the space $C[0,1]$, in the year $1987$. In fact if $q=1$, then this operator reduces to the classical Bernstein operator \cite{KR8}. For $q>0$ and any $\phi$ lies in $C[0,1]$, we consider a nonlinear $q$-analogue Bernstein operator,
	\begin{equation}\label{eq1_Bern}
		\tilde {L}_{n,q}(\phi(a))=\sum_{i=0}^n \left|\phi\bigg(\frac{[i]_q}{[n]_q}\bigg)\right|b_{n,i}(q,a)\quad\textrm{for}~a\in[0,1],
	\end{equation}
	where for each $i\in \mathbb{Z}^{+}$,
	\begin{equation*}
		b_{n,i}(q,a)=\frac{\left[\begin{array}{l}{n}\\{i}\end{array}\right]_q q^{\frac{i(i-1)}{2}}a^i(1-a)^{n-i}}{\prod_{j=0}^{n-1}\{1-a+q^ja\}}, 
		\hspace*{0.15cm} 
		\left[\begin{array}{l}{n}\\{i}\end{array}\right]_q=\frac{[n]_q!}{[n-i]_q![i]_q!},
		\hspace*{0.15cm}
		[n]_q!=\prod_{i=1}^{n}[i]_q,
	\end{equation*}
	and $[0]_q!=1$, where $[i]_q=\sum_{k=0}^{i-1}q^{k}$ for $i\geq 1$ and $[0]_q=0$.
	Agratini \cite{Agra} first studied the convergence of iterates of $q$-analogue Bernstein operator $L_{n,q}$ on the space $C[0,1]$ in the year 2008. In the upcoming corollary we derive the the convergence of iterates for nonlinear  $q$-analogue Bernstein operator $\tilde{L}_{n,q}$ through Theorem \ref{iterates}.
	
	%%%%%%%%%%%%%%%%%%%%%%%%%%%%%%%%%%%%%%%%%%%%%%%%%%%%%%%%%%%%%%%%%%%%%%%%%%%%
	\begin{corollary}
		Suppose that $\tilde{L}_{n,q}~(q>0)$ is an operator as mentioned in $(\ref{eq1_Bern})$. Then for each $\varphi\in C[0,1]$ and fixed $n\in \mathbb{N}$, $\lim_{j\to \infty}\tilde{L}_{n,q}^j\varphi \in \{\varphi^{*}\in C[0,1]:\tilde{L}_{n,q}\varphi^{*}=\varphi^{*}\}$. Furthermore, 
		for each $\varphi\in W^{'}=\{\varsigma\in C[0,1]$: $\varsigma(1)\geq 0$ and $\varsigma(0)\geq 0$\},
		\begin{equation*}
			\lim_{j\to \infty}(\tilde{L}_{n,q}^j\varphi)(a)=\varphi(0)(1-a)+\varphi(1)a \quad \textrm{whereas}~0\leq a\leq 1.
		\end{equation*}
	\end{corollary}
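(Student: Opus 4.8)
The plan is to apply Theorem~\ref{iterates} directly to $T=\tilde{L}_{n,q}$ on $W=C[0,1]$ with $\|\cdot\|=\max_{a\in[0,1]}|\cdot|$, taking $W_0=W$ (the whole space is itself a closed subspace); then $(I-T)(W)\subseteq W_0$ is automatic and the coset $w+W_0$ imposes nothing, so the conclusion of Theorem~\ref{iterates} becomes exactly: for every $\varphi\in C[0,1]$, $\lim_j\tilde{L}_{n,q}^j\varphi$ exists and is a fixed point of $\tilde{L}_{n,q}$, which is the first assertion. Two structural facts carry the argument. Writing $L_{n,q}\phi(a)=\sum_{i=0}^n\phi\big(\tfrac{[i]_q}{[n]_q}\big)b_{n,i}(q,a)$ for the linear Lupa\c{s} operator and $|\phi|$ for the function $a\mapsto|\phi(a)|$, one has $\tilde{L}_{n,q}\phi=L_{n,q}|\phi|$, so $\tilde{L}_{n,q}\phi\ge 0$ always (the $b_{n,i}(q,\cdot)$ are nonnegative and the denominator $\prod_{j=0}^{n-1}\{(1-a)+q^ja\}$ is strictly positive on $[0,1]$ for every $q>0$); moreover, since $b_{n,i}(q,0)=\delta_{i0}$, $b_{n,i}(q,1)=\delta_{in}$ and $\tfrac{[0]_q}{[n]_q}=0$, $\tfrac{[n]_q}{[n]_q}=1$, we get $\tilde{L}_{n,q}\phi(0)=|\phi(0)|$ and $\tilde{L}_{n,q}\phi(1)=|\phi(1)|$. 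Finally $\tilde{L}_{n,q}$ is non-expansive (from $\||\phi|-|\psi|\|\le\|\phi-\psi\|$ and $\|L_{n,q}\|=1$), hence $h(u)=D(u,\tilde{L}_{n,q}u)=\|u-\tilde{L}_{n,q}u\|$ is continuous, a fortiori lower semicontinuous.

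The step I expect to be the main obstacle is the verification of~(\ref{convergence}) for an arbitrary $w\in C[0,1]$. Putting $\psi=|w|\ge 0$, nonnegativity gives $\tilde{L}_{n,q}w=L_{n,q}\psi$ and $\tilde{L}_{n,q}^2w=L_{n,q}|L_{n,q}\psi|=L_{n,q}^2\psi$, hence $\tilde{L}_{n,q}w-\tilde{L}_{n,q}^2w=L_{n,q}(\psi-L_{n,q}\psi)$, and $\psi-L_{n,q}\psi$ vanishes at $0$ and $1$ because $L_{n,q}$ interpolates at the endpoint nodes. The crux is that $L_{n,q}$ is a strict contraction on $Z:=\{\chi\in C[0,1]:\chi(0)=\chi(1)=0\}$: for $\chi\in Z$ the $i=0$ and $i=n$ terms of $L_{n,q}\chi$ vanish, so $\sum_i b_{n,i}(q,a)=1$ gives $|L_{n,q}\chi(a)|\le\|\chi\|\big(1-b_{n,0}(q,a)-b_{n,n}(q,a)\big)$, and since $b_{n,0}(q,\cdot)+b_{n,n}(q,\cdot)$ is continuous and strictly positive on all of $[0,1]$ it attains a positive minimum $c_n$ there, whence $\|L_{n,q}\chi\|\le(1-c_n)\|\chi\|$. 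Absorbing the outer absolute value via the pointwise bound $\big|\,|w(a)|-L_{n,q}(|w|)(a)\,\big|\le\big|\,w(a)-L_{n,q}(|w|)(a)\,\big|$ (valid since $L_{n,q}(|w|)(a)\ge 0$) gives $\|\psi-L_{n,q}\psi\|\le\|w-\tilde{L}_{n,q}w\|$, so $\|\tilde{L}_{n,q}w-\tilde{L}_{n,q}^2w\|\le(1-c_n)\|w-\tilde{L}_{n,q}w\|$, i.e.\ (\ref{convergence}) holds with the constant $k\equiv 1-c_n\in U$. Theorem~\ref{iterates} then yields the first assertion.

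For the explicit limit on $W'$, I would fix $\varphi\in W'$ and set $g=\lim_j\tilde{L}_{n,q}^j\varphi$. Since every iterate with $j\ge 1$ is nonnegative, $g\ge 0$, so $g=\tilde{L}_{n,q}g=L_{n,q}g$; and iterating $\tilde{L}_{n,q}\phi(0)=|\phi(0)|$ shows $g(0)=|\varphi(0)|=\varphi(0)$ and $g(1)=\varphi(1)$ (using $\varphi\in W'$). With $\ell(a)=\varphi(0)(1-a)+\varphi(1)a$ one has $L_{n,q}\ell=\ell$ because $L_{n,q}$ reproduces affine functions, so $g-\ell\in Z$ is fixed by $L_{n,q}$, and the contraction estimate forces $\|g-\ell\|\le(1-c_n)\|g-\ell\|$, hence $g=\ell$ --- the claimed formula. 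The partition-of-unity identity $\sum_i b_{n,i}(q,a)=1$ and the reproduction of affine functions by $L_{n,q}$ are classical properties of the Lupa\c{s} operator that I would cite rather than reprove; the one genuinely delicate point is the positivity of $\min_{[0,1]}\big(b_{n,0}(q,\cdot)+b_{n,n}(q,\cdot)\big)$, which supplies the contraction constant $1-c_n<1$.
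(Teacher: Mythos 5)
Your proof is correct, and although it reuses the paper's central quantitative ingredient --- the estimate $\|L_{n,q}\chi\|\le (1-c_n)\|\chi\|$ for $\chi$ vanishing at $0$ and $1$, with $c_n=\min_{[0,1]}(b_{n,0}(q,\cdot)+b_{n,n}(q,\cdot))>0$ obtained from the partition of unity $\sum_i b_{n,i}(q,\cdot)=1$ --- it feeds that estimate into Theorem~\ref{iterates} in a genuinely different way. The paper takes $W_0=\{\varphi:\varphi(0)=\varphi(1)=0\}$, verifies (\ref{convergence}) only for $\phi$ with $\phi-\tilde{L}_{n,q}\phi\in W_0$ (absorbing the outer absolute values via $||x|-|y||\le|x-y|$ inside the sum), invokes $(I-\tilde{L}_{n,q})(W)\subseteq W_0$, and then reads the explicit limit off the coset conclusion: $(\varphi+W_0)\cap \{\varphi^*:\tilde{L}_{n,q}\varphi^*=\varphi^*\}$ is a singleton containing $\varphi(0)(1-a)+\varphi(1)a$. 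You instead take $W_0=W$, which obliges you to prove (\ref{convergence}) for \emph{every} $w$; you do this by factoring through the linear Lupa\c{s} operator, $\tilde{L}_{n,q}w=L_{n,q}|w|$ and $\tilde{L}_{n,q}^2w=L_{n,q}^2|w|$, together with the pointwise bound $\||w|-L_{n,q}|w|\|\le\|w-\tilde{L}_{n,q}w\|$, and since the coset then carries no information you identify the limit on $W'$ separately from $g(0)=|\varphi(0)|=\varphi(0)$, $g(1)=\varphi(1)$, reproduction of affine functions, and the contraction on $Z$. What your route buys is robustness: for the nonlinear operator one has $(\varphi-\tilde{L}_{n,q}\varphi)(0)=\varphi(0)-|\varphi(0)|$, which vanishes only when $\varphi(0)\ge 0$, so the inclusion $(I-\tilde{L}_{n,q})(W)\subseteq W_0$ used in the paper really only holds on $W'$; your choice $W_0=W$, paid for by the stronger verification you supply, delivers the first assertion for every $\varphi\in C[0,1]$ without that step. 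What the paper's route buys, where it applies, is that the coset conclusion of Theorem~\ref{iterates} pins down the limit with no extra endpoint bookkeeping. Both arguments rest on the same classical facts about $L_{n,q}$ (positivity, partition of unity, endpoint interpolation, and reproduction of affine functions).
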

	%%%%%%%%%%%%%%%%%%%%%%%%%%%%%%%%%%%%%%%%%%%%%%%%%%%%%%%%%%%%%%%%%%%%5
	\begin{proof}
		Consider $W=C[0,1]$ with supremum norm. Let $W_0\subseteq W$ consist of all $\varphi\in W$ whose zeros are $0$ and $1$. Take a function $\phi$ in $W$ in order that $\phi-\tilde{L}_{n,q}\phi\in W_0$. Then for each $a$ lies in $[0,1]$ we have,
		\begin{eqnarray}
			%\begin{aligned}
			\big|\tilde{L}_{n,q}\phi(a)-\tilde{L}_{n,q}^2\phi(a)\big|&\leq&\sum_{i=1}^{n-1} b_{n,i}(q,a) \left|(\phi-\tilde{L}_{n,q}\phi)\left(\frac{[i]_q}{[n]_q}\right)\right|\nonumber\\
			&\leq&\left[1-b_{n,n}(q,a)-b_{n,0}(q,a)\right]\|\phi-\tilde{L}_{n,q}\phi\|.\label{eq10_application2}
			%\end{aligned}
		\end{eqnarray}
		Consider $h_{n}(a)=q^{n(n-1)/2}a^n+(1-a)^n$, then for $n=1$, the minimum value of $h_{n}(a)$ is $1$. For $n \geq 2$, the minimum value of $h_{n}(a)$  is $\left(q^{n/2}/(1+q^{n/2})\right)^{n-1}$, which occurs at $a=(1+q^{n/2})^{-1}$. 
		On the other hand consider $g_{n}(a)=\prod_{s=0}^{n-1}(1-a+q^s a)$, then one can easily verify that maximum of $g_{n}(a)$ over $[0,1]$ is less than or  equal to $\max\{1,q^{(n-1)^2}\}$. Consequently, 
		\begin{equation}
			\frac{1}{\max\{q^{(n-1)^2},1\}}\bigg(\frac{q^{n/2}}{1+q^{n/2}}\bigg)^{n-1} \leq \displaystyle{\frac{h_{n}(a)}{g_{n}(a)}}= b_{n,n}(q,a)+b_{n,0}(q,a).
			%=\displaystyle{\frac{h_{n}(x)}{g_{n}(x)}}\geq\bigg(\frac{q^{n/2}}{1+q^{n/2}}\bigg)^{n-1} \frac{1}{\max\{q^{(n-1)^2},1\}}. 
			\label{eq9_application2}
		\end{equation}
		Again we observe that $0<b_{n,q}=\frac{1}{\max\{1,q^{(n-1)^2}\}}\bigg(\frac{q^{n/2}}{1+q^{n/2}}\bigg)^{n-1} \leq 1$. Subsequently from (\ref{eq10_application2}) and (\ref{eq9_application2}), it follows that
		$||\tilde{L}_{n,q} \phi-\tilde{L}_{n,q}^2\phi||\leq k(||\phi-\tilde{L}_{n,q} \phi||)||\phi-\tilde{L}_{n,q} \phi||$, where $\forall\,t \geq 0$, $k(t)=(1-b_{n,q})\in U$.
		Consider a sequence $\{\eta_j\}_j\in W$ in order that $\eta_j\to \eta$. Then for every $a\in[0,1] $ and $j\in \mathbb{Z}^{+}$,
		\begin{equation*}
			\begin{aligned}
			|\eta(a) -\tilde{L}_{n,q}\eta(a)|&\leq|\eta(a)-\eta_j(a)|+|\eta_j(a) -\tilde{L}_{n,q}\eta_j(a)|+|\tilde{L}_{n,q}\eta_j(a)-\tilde{L}_{n,q}\eta(a)|\\
			&\leq \|\eta-\eta_j\|+\|\eta_j -\tilde{L}_{n,q}\eta_j\|+\|\eta_j-\eta\|.
			\end{aligned}
		\end{equation*}
		Therefore $\|\eta -\tilde{L}_{n,q}\eta\|\leq\|\eta-\eta_j\|+\|\eta_j -\tilde{L}_{n,q}\eta_j\|+\|\eta_j-\eta\|$. Now taking $j\to \infty$, we have $\|\eta -\tilde{L}_{n,q}\eta\|\leq\liminf_{j\to \infty}\|\eta_j -\tilde{L}_{n,q}\eta_j\|$. Again it is simple to observe that $(I-\tilde{L}_{n,q})(W)\subseteq W_0$. Then using the Theorem $\ref{iterates}$, for any $\varphi\in W$, 
		$\displaystyle{\left\{\lim_{j\to \infty}\tilde{L}_{n,q}^j\varphi\right\}}\subseteq(\varphi+W_0)\cap Fix\,\tilde{L}_{n,q}$. It is simple to verify that the set $(\varphi+W_0)\cap Fix\,\tilde{L}_{n,q}$ is singleton, also $a$ and $1-a$ are fixed points of $\tilde{L}_{n,q}$. Again for each $\varphi\in W^{'}$, $\varphi(0)(1-a)+\varphi(1)a \in Fix\,\tilde{L}_{n,q} $. Furthermore, $\varphi(0)(1-a)+\varphi(1)a\in \varphi+W_0$. Consequently, we conclude that 	$\lim_{j\to \infty}(\tilde{L}_{n,q}^j\varphi)(a)=\varphi(0)(1-a)+\varphi(1)a$ for each $\varphi\in W^{'}$.
	\end{proof}

	%%%%%%%%%%%%%%%%%%%%%%%%%%%%%%%%%%%%%%%%%%%%%%%%%%%%%
	
	%{\bf Definition 2.1} $\cdots$
	%
	%\bigskip
	%
	%{\bf Lemma 2.2} {\it $\cdots$ \ }
	%
	%{\bf Proof.}
	%
	%\bigskip
	%
	%{\bf Theorem 2.3} {\it $\cdots$ \ }
	%
	%{\bf Proof.}
	%
	%\bigskip
	%
	%{\bf Corollary 2.4} {\it $\cdots$ \ }
	%
	%{\bf Proof.}
	%
	%\bigskip
	%
	%{\bf Remark 2.5} {\it $\cdots$ \  }
	
	\section*{Acknowledgment}
	The first author is thankful to the Ministry of Human Resource Development (MHRD), India for the financial support.
	%\newpage

%\end{thebibliography}

\end{document}